\documentclass[reqno]{amsart}
\usepackage{amsmath,amssymb,amsthm,amsfonts,mathtools}
\usepackage[a4paper,margin=1in]{geometry}
\usepackage{hyperref}
\usepackage[dvipsnames]{xcolor}
\hypersetup{
  colorlinks=true,
  linkcolor=MidnightBlue,
  citecolor=MidnightBlue,
  urlcolor=MidnightBlue
}
\usepackage{enumitem}
\usepackage{mathrsfs}
\usepackage{bm}

\allowdisplaybreaks
\newtheorem{theorem}{Theorem}[section]
\newtheorem{remark}[theorem]{Remark}
\newtheorem{lemma}[theorem]{Lemma}
\newtheorem{proposition}[theorem]{Proposition}
\newtheorem{corollary}[theorem]{Corollary}
\newtheorem{definition}[theorem]{Definition}
\newtheorem{example}[theorem]{Example}

\numberwithin{equation}{section}

\newcommand{\R}{\mathbb{R}}
\newcommand{\N}{\mathbb{N}}
\newcommand{\Om}{\Omega}

\newcommand{\dd}{\,\mathrm{d}x}

\newcommand{\norm}[1]{\left\lVert #1 \right\rVert}
\newcommand{\abs}[1]{\left| #1 \right|}
\newcommand{\ip}[2]{\left\langle #1,#2\right\rangle}
\newcommand{\mcH}{\mathcal{H}}
\newcommand{\mcM}{\mathcal{M}}
\newcommand{\mcS}{\mathcal{S}}
\newcommand{\varrh}{\varrho_{\mcH}}

\title[On Polyharmonic Kirchhoff double phase problems]{On polyharmonic
Kirchhoff double phase problems without AR-conditions}

\author[A. Dixit]{Ashutosh Dixit}
\author[T. Mukherjee]{Tuhina Mukherjee}

\thanks{
Ashutosh Dixit, Department of Mathematics, Indian Institute of Technology Jodhpur,
Rajasthan 342030, India. Email: \href{mailto:p23ma0014@iitj.ac.in}{p23ma0014@iitj.ac.in}.
}

\thanks{
Tuhina Mukherjee, Department of Mathematics, Indian Institute of Technology Jodhpur,
Rajasthan 342030, India. Email: \href{mailto:tuhina@iitj.ac.in}{tuhina@iitj.ac.in}.
}

\thanks{
Corresponding author: Ashutosh Dixit.
}

\subjclass[2020]{35A15, 35J35, 35G20, 35J60}

\keywords{Polyharmonic double phase operator, Kirchhoff problems, Musielak--Orlicz--Sobolev spaces, mountain pass theorem, symmetric mountain pass theorem}

\begin{document}

\begin{abstract}
In this paper, we study a class of polyharmonic Kirchhoff problems driven by a double phase operator. The reaction term has subcritical growth but does not satisfy the Ambrosetti--Rabinowitz condition. Motivated by the work of Harrabi-Hamdani-Fiscella \cite{Harrabi-Hamdani-Fiscella-2024} on m-polyharmonic Kirchhoff problems without Ambrosetti--Rabinowitz conditions, we extend their analysis to a nonhomogeneous double phase setting. We study the problem in the natural Musielak--Orlicz--Sobolev framework associated with the double phase structure. The main novelty of the paper lies in combining the nonlocal Kirchhoff term with a higher-order double phase operator under assumptions weaker than the classical Ambrosetti--Rabinowitz condition. By developing suitable modular estimates and compactness arguments, we establish the variational setting and obtain existence and multiplicity results via minimax methods.

\end{abstract}

\maketitle

\section{Introduction}

Nonlinear higher-order elliptic equations arise naturally in several models from
mathematical physics, geometry, and continuum mechanics. A classical example is
the biharmonic equation, which describes the bending of thin elastic plates in the
Kirchhoff--Love theory \cite{Love-1927}. Higher-order operators also appear in
strain-gradient elasticity and in elastic media with microstructure, where additional
derivatives are needed to describe internal stresses and microscopic deformations
\cite{Antman-2005,Mindlin-1964}. Further applications occur in low Reynolds number
hydrodynamics, structural engineering and nonlinear optics
\cite{Meleshko-2003,Selvadurai-2000,Fibich-Ilan-Papanicolaou-2002}. Moreover,
higher-order equations are closely related to phase separation models, such as the
Cahn--Hilliard equation \cite{Cahn-Hilliard-1958}, and to recent models involving
complex fluids and soft matter \cite{Malchiodi-Mandel-Rizzi-2018,Rizzi-2017}.

From the analytical point of view, the passage from second-order elliptic equations
to polyharmonic equations is far from formal. Indeed, many tools which are available
for equations driven by the Laplacian do not extend naturally to higher-order
operators. For instance, maximum principles and P\'olya--Szeg\H{o}-type inequalities
are generally not available for biharmonic and polyharmonic problems. Moreover, the
Green function associated with higher-order operators may change sign even in simple
domains, which makes positivity arguments and representation formulas substantially
more delicate. In addition, higher-order Sobolev embeddings are less favorable, and
the treatment of nonlinear terms requires refined compactness and convergence
arguments.

The study of nonlinear polyharmonic equations has therefore developed as an
important branch of nonlinear elliptic theory. We refer to the monograph of
Gazzola--Grunau--Sweers \cite{Gazzola-Grunau-Sweers-2010} for a systematic account
of polyharmonic boundary value problems and positivity preserving properties for
higher-order elliptic equations. A general variational identity for nonlinear
elliptic problems was obtained by Pucci--Serrin \cite{Pucci-Serrin-1986}. In the
whole space setting, Berestycki--Lions \cite{Berestycki-Lions-1983} developed a
fundamental variational approach for nonlinear scalar field equations, which has
inspired several later works on higher-order analogues. More recently, Mederski
\cite{Mederski-2020,Mederski-2021} obtained nonradial solutions and developed a
general class of optimal Sobolev inequalities connected with nonlinear scalar field
equations. In the biharmonic case, Mederski--Siemianowski
\cite{Mederski-Siemianowski-2023} studied nonlinear scalar field equations and
established the existence of ground state solutions. Very recently,
Cannone--Cingolani--Mederski \cite{Cannone-Cingolani-Mederski-2025} extended this
line of research to genuine polyharmonic equations of the form
\[
(-\Delta)^m u=g(u)\quad \text{in }\mathbb R^N,
\]
obtaining ground state solutions under general subcritical assumptions inspired by
Berestycki--Lions \cite{Berestycki-Lions-1983} and proving a new polyharmonic logarithmic Sobolev inequality.

In recent years, problems with nonstandard growth have received considerable
attention, especially those driven by double phase operators. Such problems are
naturally associated with variational integrals of the form
\begin{equation}\label{eq:intro-double-phase}
    u \longmapsto
    \int_{\Omega}\left(|\nabla u|^p+a(x)|\nabla u|^q\right)\,dx,
    \qquad 1<p<q,
\end{equation}
or, more generally, with variable exponent densities involving both $p(x)$- and
$q(x)$-growth. The terminology ``double phase'' reflects the fact that the
ellipticity of the integrand changes according to the behavior of the modulating
coefficient $a(\cdot)$. In the region where $a(x)>0$, the leading growth is of
$q$-type, while in the region where $a(x)=0$, the ellipticity is of $p$-type.
This mixture of different growth regimes makes the associated operator
nonhomogeneous and leads to a number of delicate analytical phenomena.

The functional \eqref{eq:intro-double-phase} was introduced by Zhikov
\cite{Zhikov-1986,Zhikov-1995,Zhikov-2011} in the study of strongly anisotropic
materials and homogenization problems. In elasticity theory, the coefficient
$a(\cdot)$ describes the geometry of composites made of two different materials
with distinct hardening exponents. Double phase operators also appear in several
models from physics and engineering, including transonic flow problems, quantum
physics and reaction--diffusion systems; see
\cite{Bahrouni-Radulescu-Repovs-2019,Benci-Davenia-Fortunato-Pisani-2000,
Cherfils-Ilyasov-2005}.

The regularity theory for double-phase functionals has been developed in a
large body of work. We refer, for example, to
Baroni--Colombo--Mingione \cite{Baroni-Colombo-Mingione-2015,
Baroni-Colombo-Mingione-2018}, Baroni--Kuusi--Mingione
\cite{Baroni-Kuusi-Mingione-2014}, Colombo--Mingione
\cite{Colombo-Mingione-2015a,Colombo-Mingione-2015b},
Byun--Oh \cite{Byun-Oh-2020}, Byun--Ok--Song \cite{Byun-Ok-Song-2022},
De Filippis--Palatucci \cite{DeFilippis-Palatucci-2019}, and the references
therein. Nonuniformly elliptic and nonautonomous variational problems have also
been investigated by Beck--Mingione \cite{Beck-Mingione-2020},
De Filippis--Mingione \cite{DeFilippis-Mingione-2020,
DeFilippis-Mingione-2021}, and H\"ast\"o--Ok \cite{Hasto-Ok-2022}. Moreover,
double phase functionals belong to the broader class of variational integrals
with nonstandard growth, whose study goes back to the pioneering works of
Marcellini \cite{Marcellini-1989,Marcellini-1991}; see also the recent works
of Cupini--Marcellini--Mascolo \cite{Cupini-Marcellini-Mascolo-2023} and
Marcellini \cite{Marcellini-2023}.

A recent development in this direction concerns logarithmic double phase
operators. In \cite{Arora-CrespoBlanco-Winkert-2025b}, Arora--Crespo-Blanco--Winkert
introduced the logarithmic double phase functional
\[
    u \longmapsto
    \int_{\Omega}
    \left(
    \frac{|\nabla u|^{p(x)}}{p(x)}
    +
    \mu(x)\frac{|\nabla u|^{q(x)}}{q(x)}
    \log(e+|\nabla u|)
    \right)\,dx,
\]
and studied the corresponding logarithmic Musielak--Orlicz--Sobolev spaces.
They proved structural properties of these spaces, including separability,
reflexivity, embedding results, density of smooth functions, and the closedness
under truncations. They also established that the associated logarithmic double
phase operator is bounded, continuous, strictly monotone, of type $(S_+)$,
coercive and a homeomorphism, and obtained multiplicity results for equations
with superlinear right-hand sides. This logarithmic framework shows that double
phase theory naturally fits into the broader setting of Musielak--Orlicz
analysis and indicates the increasing relevance of nonstandard modular methods
in the study of nonlinear elliptic problems.

Another important feature of the problem considered in this paper is the
presence of a Kirchhoff-type nonlocal term. The origin of such problems goes
back to the classical model introduced by Kirchhoff \cite{Kirchhoff-1883},
\[
\rho \frac{\partial^2 \xi}{\partial t^2}
-
\left(
\frac{\rho_0}{h}
+
\frac{E}{2L}\int_0^L
\left|\frac{\partial \xi}{\partial z}\right|^2\,dz
\right)
\frac{\partial^2 \xi}{\partial z^2}=0,
\]
which describes the transverse vibrations of a stretched elastic string. This
equation is a nonlocal extension of the classical D'Alembert wave equation, since
the tension depends on the integral of the gradient of the displacement over the
whole interval. After the seminal contribution of Lions \cite{Lions-1978}, who
introduced an abstract framework for Kirchhoff-type equations, a large literature
has been devoted to the study of such problems by variational, topological and
operator-theoretic methods.

Kirchhoff-type equations have been investigated in many directions. Arosio and
Panizzi \cite{Arosio-Panizzi-1996} studied the well-posedness of the Kirchhoff
string, while Cavalcanti--Domingos Cavalcanti--Soriano
\cite{Cavalcanti-Cavalcanti-Soriano-2001} obtained global existence and decay
results for Kirchhoff--Carrier equations with nonlinear dissipation. More recent
developments include nonlinear Schr\"odinger--Kirchhoff equations with positive
solutions and ground states \cite{Chen-Fu-Wu-2020,Chen-Fu-Wu-2022}, as well as
non-autonomous Kirchhoff problems with Choquard-type nonlinearities
\cite{Zuo-Zhang-Radulescu-2024}. Kirchhoff-type problems with critical growth,
fractional and Orlicz--Sobolev structures, and generalized potential systems have
also been treated in \cite{Benchira-Matallah-ElMokhtar-Sabri-2023,
ChemsEddine-Ragusa-2022,ElHouari-Chadli-Moussa-2024}.

Recently, Kirchhoff problems have been combined with double-phase operators and
variable exponent growth. This direction is motivated by the fact that double
phase operators naturally describe media whose ellipticity changes from one
region to another, while Kirchhoff terms introduce a global nonlocal dependence
on the energy of the solution. Wang--Hou--Ge \cite{Wang-Hou-Ge-2021} studied
double phase problems by means of topological degree methods. Crespo-Blanco--
Gasi\'nski--Harjulehto--Winkert \cite{Crespo-Blanco-Gasinski-Harjulehto-Winkert-2022}
introduced and analyzed a new class of double-phase problems with variable
exponents and obtained existence and uniqueness results. Gasi\'nski--Winkert
\cite{Gasinski-Winkert-2020} considered double-phase problems with convection
terms and proved existence and uniqueness results through the theory of
pseudomonotone operators. More recently, Moujane--El Ouaarabi
\cite{Moujane-ElOuaarabi-2025} investigated Schr\"odinger--Kirchhoff double phase
problems with variable exponents and convection terms, proving the existence of
weak and strong generalized solutions by combining topological degree arguments
for generalized demicontinuous operators of type $(S_+)$ with the Galerkin method
in Musielak--Orlicz--Sobolev spaces. Very recently, Dixit--Mukherjee--Winkert \cite{Dixit-Mukherjee-Winkert-2026}
studied polyharmonic Kirchhoff problems with double phase structure and
subcritical nonlinearities, and developed a variational framework for obtaining
existence results in the corresponding Musielak--Orlicz--Sobolev setting. In the framework of higher-order Kirchhoff problems, Harrabi-Hamdani-Fiscella \cite{Harrabi-Hamdani-Fiscella-2024} established existence and multiplicity results for m-polyharmonic Kirchhoff equations without the Ambrosetti--Rabinowitz conditions. Their work provides an important contribution to the study of nonlocal higher-order problems and serves as one of the motivations for the present investigation.

Although these works provide substantial progress in the study of Kirchhoff
double phase equations, most of them are concerned with first-order operators.
The simultaneous presence of a Kirchhoff nonlocal coefficient, a double phase
growth structure and a polyharmonic operator creates additional difficulties.
Indeed, in the higher-order setting, standard truncation arguments and positivity
techniques are no longer directly available, while the Kirchhoff term couples the
whole equation through the modular energy of the highest-order gradient. This
makes the corresponding variational analysis substantially different from both
classical Kirchhoff problems and first-order double phase equations.

Let $\Omega\subset\mathbb R^N$ be a bounded domain with Lipschitz boundary
$\partial\Omega$, and let $m\in\mathbb N$. In this paper, we study the following
polyharmonic Kirchhoff double phase problem:
\begin{equation}\label{eq:main-problem}
\begin{cases}
M\!\left(\displaystyle\int_{\Om}\left(\frac{\abs{\nabla^{m}u}^{p}}{p}
+a(x)\frac{\abs{\nabla^{m}u}^{q}}{q}\right)\dd\right)\,
\mathcal{L}^{m}_{p,q,a}u
=h(x,u), & \text{in } \Om, \\[0.3cm]
u=\nabla u=\cdots=\nabla^{m-1}u=0, & \text{on } \partial\Om.
\end{cases}
\end{equation}
Throughout the paper, we assume that 
\begin{equation}\label{eq:standing}
1<p<q,\qquad mq<N,\qquad \frac{q}{p}\le \frac{N}{N-1},
\qquad
a\in C^{0,\kappa}(\overline{\Omega}),\quad a(x)\ge a_0>0 \qquad \text{for all }x\in\overline\Omega,
\end{equation}
for some $\kappa\in(0,1]$, and $h(x,u)$ is a function with different assumptions, specified in various sections further. 
We denote by
\[
p_m^*:=\frac{Np}{N-mp}\quad \text{and}\quad~q_m^*:=\frac{Nq}{N-mq}
\]
the critical Sobolev exponent corresponding to the $p$-phase, and $q$-phase, respectively.
The  $m$-order differential operator $\nabla^m u$ is defined by
\begin{equation}\label{eq:m-gradient}
\nabla^m u=
\begin{cases}
\nabla \Delta^{\frac{m-1}{2}}u, & \text{if } m \text{ is odd},\\[0.2cm]
\Delta^{\frac{m}{2}}u, & \text{if } m \text{ is even},
\end{cases}
\end{equation}
where $\nabla$ and $\Delta$ denote the classical gradient and Laplace operators, respectively. The polyharmonic double phase operator $\mathcal{L}^{m}_{p,q,a}(u)$ is given by
\begin{equation}\label{eq:operator-strong}
\mathcal{L}^{m}_{p,q,a}(u)=
\begin{cases}
-\operatorname{div}\!\Bigg(
\Delta^{\frac{m-1}{2}}
\Big(
|\nabla \Delta^{\frac{m-1}{2}}u|^{p-2}\nabla \Delta^{\frac{m-1}{2}}u
+
a(x)|\nabla \Delta^{\frac{m-1}{2}}u|^{q-2}\nabla \Delta^{\frac{m-1}{2}}u
\Big)
\Bigg),
& \text{for } m \text{ odd}, \\[0.4cm]
\Delta^{\frac{m}{2}}\!\Big(
|\Delta^{\frac{m}{2}}u|^{p-2}\Delta^{\frac{m}{2}}u
+
a(x)|\Delta^{\frac{m}{2}}u|^{q-2}\Delta^{\frac{m}{2}}u
\Big),
& \text{for } m \text{ even}.
\end{cases}
\end{equation}

We impose the following assumptions on the Kirchhoff function $M$:
\begin{enumerate}[label=\textnormal{(M\arabic*)},leftmargin=2.2cm]
\item\label{M1}
$M\colon [0,\infty)\to[0,\infty)$ is continuous, and there exist $t_0\ge 0$ and
\[
\gamma\in\left[1,\frac{p_m^*}{q}\right)
\]
such that
\[
tM(t)\le \gamma \mcM(t)\qquad \text{for all }t\ge t_0,
\]
where $\mcM(t)=\int_0^t M(\tau)\,\mathrm{d}\tau$.

\item\label{M2}
For every $\sigma>0$ there exists $k_\sigma>0$ such that
\[
M(t)\ge k_\sigma \qquad \text{for all }t\ge \sigma.
\]
\end{enumerate}
\begin{example}
	A concrete example of a Kirchhoff function satisfying hypothesis \ref{M1}, and \ref{M2} is
	\begin{align}\label{4.1.2}
		{M}(\tau)=a+b{\gamma}\tau^{\gamma-1}, \quad a,b\ge0,\quad a+b>0,\quad
		\gamma \in \begin{cases}
			\left(1,\frac{p_m^*}{q}\right), & \text{if } b>0,\\[4pt]
			1, & \text{if } b=0.
		\end{cases}
	\end{align}
	If ${M}$ is given by \eqref{4.1.2}, the corresponding problem \eqref{eq:main-problem} is called nondegenerate when $a>0$ and $b\ge0$, while it is called degenerate when $a=0$ and $b>0$. 
 \end{example}
We now summarize the main results of the paper. Under different assumptions on the
nonlinearity, we obtain the following results
\begin{itemize}
    \item Problem \eqref{eq:main-problem} possesses infinitely many distinct pairs
    of weak solutions $\{u_j,-u_j\}_{j\in\N}$, when $h(x,u)= K(x)g(u)$.

    \item Under various growth assumptions on $h(x,u)$, Problem \eqref{eq:main-problem-sec4} can possess a nontrivial weak solution
    of mountain pass type or infinitely many distinct pairs of
    nontrivial weak solutions.
\end{itemize}

The paper is organized as follows. In Section \ref{sec:preliminaries}, we collect some basic properties of Musielak--Orlicz spaces and establish the functional framework for the polyharmonic Kirchhoff double phase problem. In Section 3, we prove the compactness properties of the associated energy functional and verify the symmetric mountain-pass geometry in order to obtain infinitely many weak solutions without the Ambrosetti--Rabinowitz condition. In Section 4, we derive further existence results for the general nonlinearity by using mountain-pass arguments and minimization techniques. In particular, we also discuss the $\gamma p$-sublinear case and obtain both multiplicity and nontrivial existence results under suitable behavior of the primitive near the origin and at infinity.
\section{Preliminaries and functional framework}\label{sec:preliminaries}

In this section, we recall the basic notions and properties of generalized Orlicz and Musielak--Orlicz Sobolev spaces that will be used throughout the paper.
For $1\le s<\infty$, we denote by $L^s(\Om)$ the Lebesgue space endowed with the norm $\norm{\cdot}_s$. For $1<s<\infty$, the Sobolev space $W_0^{m,s}(\Om)$ is equipped with the equivalent norm $\norm{\nabla^m(\cdot)}_s$.

\begin{definition}\label{def:phi}
A function $\varphi\colon [0,\infty)\to[0,\infty)$ is called a $\Phi$-function if it is continuous, convex, satisfies $\varphi(0)=0$, and $\varphi(t)>0$ for all $t>0$.
\end{definition}

\begin{definition}\label{def:N}
A $\Phi$-function $\varphi$ is said to be an $\mathcal{N}$-function if
\[
\lim_{t\to 0^+}\frac{\varphi(t)}{t}=0
\quad \text{and} \quad
\lim_{t\to\infty}\frac{\varphi(t)}{t}=\infty.
\]
\end{definition}

\begin{definition}\label{def:gen-phi}
A function $\varphi\colon \Om\times[0,\infty)\to[0,\infty)$ is called a generalized $\Phi$-function, denoted by $\varphi\in \Phi(\Om)$, if the map $x\mapsto \varphi(x,t)$ is measurable for every $t\ge 0$, and for a.e.\ $x\in\Om$, the function $t\mapsto \varphi(x,t)$ is a $\Phi$-function.
\end{definition}

\begin{definition}\label{def:gen-N}
A function $\varphi\colon \Om\times[0,\infty)\to[0,\infty)$ is called a generalized $\mathcal{N}$-function, written $\varphi\in N(\Om)$, if for each $t\ge 0$ the map $x\mapsto \varphi(x,t)$ is measurable and, for a.e.\ $x\in\Om$, the function $t\mapsto \varphi(x,t)$ is an $\mathcal{N}$-function.
\end{definition}

\begin{definition}\label{def:dominance}
Let $\varphi,\psi\in \Phi(\Om)$. We say that $\varphi$ is dominated by $\psi$, and write $\varphi\preceq \psi$, if there exist constants $C_1,C_2>0$ and a function $h\in L^1(\Om)$ such that
\[
\varphi(x,t)\le C_1\,\psi(x,C_2 t)+h(x)
\]
for a.e.\ $x\in\Om$ and all $t\ge 0$.
\end{definition}

The following embedding result is classical; see \cite[Theorem 8.5]{Musielak-1983}.
\begin{proposition}\label{prop:embedding-general}
Let $\varphi,\psi\in N(\Om)$ be such that $\varphi\preceq \psi$. Then the embedding
\[
L^\psi(\Om)\hookrightarrow L^\varphi(\Om)
\]
is continuous.
\end{proposition}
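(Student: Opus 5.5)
We argue directly from the definitions of the Luxemburg norm and of the modular $\rho_\varphi(u)=\int_\Om\varphi(x,|u|)\dd$. Recall that $L^\varphi(\Om)$ consists of the measurable $u$ with $\rho_\varphi(\lambda u)<\infty$ for some $\lambda>0$, normed by
\[
\norm{u}_\varphi=\inf\{\lambda>0:\ \rho_\varphi(u/\lambda)\le 1\}.
\]
Fix $u\in L^\psi(\Om)$ with $\norm{u}_\psi\le 1$ (the general case then follows by homogeneity of the norm). Set $C_1,C_2,h$ as in Definition \ref{def:dominance}. Convexity together with $\psi(x,0)=0$ gives $\psi(x,st)\le s\,\psi(x,t)$ for $s\in[0,1]$, and the same holds for $\varphi$. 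These two facts are the only structural properties needed.

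\textbf{Key estimate.} Let $\lambda=C_2\norm{u}_\psi$. If $u=0$ there is nothing to prove, so assume $\lambda>0$. Dominance gives
\[
\rho_\varphi\!\left(\frac{u}{\lambda}\right)\le C_1\,\rho_\psi\!\left(\frac{u}{\norm{u}_\psi}\right)+\norm{h}_1\le C_1+\norm{h}_1=:K .
\]
Here we used $\rho_\psi(u/\norm{u}_\psi)\le 1$, which follows from the left-continuity of the modular by monotone convergence. In particular $u\in L^\varphi(\Om)$.

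\textbf{Normalisation.} Put $\mu=\max\{1,K\}$. By the convexity inequality,
\[
\rho_\varphi\!\left(\frac{u}{\mu\lambda}\right)\le \frac{1}{\mu}\,\rho_\varphi\!\left(\frac{u}{\lambda}\right)\le 1,
\]
and hence $\norm{u}_\varphi\le \mu C_2\norm{u}_\psi$. We should be careful that the dominance constant $C_2$ multiplies the argument and not the function. This is exactly why we evaluate $\varphi$ at $u/(C_2\norm{u}_\psi)$, so that the dominance inequality applied at $t=|u|/\lambda$ produces $\psi(x,|u|/\norm{u}_\psi)$.

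\textbf{Scaling to arbitrary $u$.} For general $u\ne0$, apply the bound to $u/\norm{u}_\psi$ and use homogeneity of both norms. This gives
\[
\norm{u}_\varphi\le C\norm{u}_\psi \qquad\text{with } C=C_2\max\{1,C_1+\norm{h}_1\},
\]
and since the inclusion is linear and bounded, it is continuous.

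The main subtle point is the step where the additive $L^1$ term $h$ is absorbed. This is possible only because $\Om$ is such that $h\in L^1$, so its integral is a finite constant, and because convexity allows us to divide the modular by a constant at the cost of rescaling $u$. Everything else is bookkeeping.
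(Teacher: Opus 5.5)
Your proof is correct. Applying dominance at $t=|u|/(C_2\norm{u}_\psi)$ and using $\rho_\psi(u/\norm{u}_\psi)\le 1$ bounds $\rho_\varphi\bigl(u/(C_2\norm{u}_\psi)\bigr)$ by $C_1+\norm{h}_1$, and convexity absorbs this constant to give $\norm{u}_\varphi\le C_2\max\{1,C_1+\norm{h}_1\}\norm{u}_\psi$. The paper gives no proof of its own and only cites Theorem 8.5 of Musielak's monograph; your argument is the standard modular-rescaling proof of that result.

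Two minor points: your preliminary restriction $\norm{u}_\psi\le 1$ is never used, and $h\in L^1(\Om)$ is part of the definition of $\preceq$, not a property of $\Om$.
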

We now introduce the double-phase density
\[
\mcH(x,t)=t^p+a(x)t^q,
\qquad \text{for all}~(x,t)\in \Om\times[0,\infty),
\]
which defines the growth of the operator under consideration. Under assumption \eqref{eq:standing}, the function $\mcH$ is a generalized $\mathcal{N}$-function.
We define the modular
\[
\varrh(u):=\int_{\Om}\mcH(x,|u|)\,\dd
=
\int_{\Om}\bigl(|u|^p+a(x)|u|^q\bigr)\,\dd,
\]
and the Musielak--Orlicz space
\[
L^{\mcH}(\Om)=
\left\{
u:\Om\to\R \text{ measurable }:\ \varrh(u)<\infty
\right\}.
\]
This space is endowed with the Luxemburg norm
\[
\norm{u}_{\mcH}
=
\inf\left\{
\lambda>0:\ \varrh\!\left(\frac{u}{\lambda}\right)\le 1
\right\}.
\]
We also introduce the weighted Lebesgue space
\[
L_a^q(\Om)=
\left\{
u:\Om\to\R \text{ measurable }:\ \int_{\Om}a(x)|u|^q\,\dd<\infty
\right\},
\]
with seminorm
\[
\norm{u}_{q,a}=
\left(\int_{\Om}a(x)|u|^q\,\dd\right)^{1/q}.
\]

The relation between the modular $\varrh$ and the Luxemburg norm is given below.
\begin{proposition}\label{prop:modular-norm}
Let $u\in L^{\mcH}(\Om)$. Then:
\begin{enumerate}[label=\textnormal{(\roman*)}]
\item $\norm{u}_{\mcH}=c$ if and only if $\varrh(u/c)=1$;
\item $\norm{u}_{\mcH}<1$ (resp.\ $=1$, $>1$) if and only if $\varrh(u)<1$ (resp.\ $=1$, $>1$);
\item if $\norm{u}_{\mcH}<1$, then $\norm{u}_{\mcH}^q \le \varrh(u)\le \norm{u}_{\mcH}^p$;
\item if $\norm{u}_{\mcH}>1$, then $\norm{u}_{\mcH}^p \le \varrh(u)\le \norm{u}_{\mcH}^q$;
\item $\norm{u}_{\mcH}\to 0$ if and only if $\varrh(u)\to 0$;
\item $\norm{u}_{\mcH}\to\infty$ if and only if $\varrh(u)\to\infty$.
\end{enumerate}
\end{proposition}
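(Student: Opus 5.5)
The argument rests on the homogeneity of the modular. For $u\in L^{\mcH}(\Om)$ with $u\neq0$, set $f(\lambda):=\varrh(\lambda u)=\lambda^p\int_\Om|u|^p\dd+\lambda^q\int_\Om a(x)|u|^q\dd$ for $\lambda>0$. Since $a\ge a_0>0$ by \eqref{eq:standing}, both integrals are finite (because $\varrh(u)<\infty$) and strictly positive (because $u\neq0$ on a set of positive measure). Hence $f$ is continuous, strictly increasing, and satisfies $f(\lambda)\to0$ as $\lambda\to0^+$ and $f(\lambda)\to\infty$ as $\lambda\to\infty$. The key elementary inequalities come from factoring out the extreme powers:
\[
\min\{\lambda^p,\lambda^q\}\,\varrh(u)\le \varrh(\lambda u)\le \max\{\lambda^p,\lambda^q\}\,\varrh(u)\qquad\text{for all }\lambda>0 .
\]
In particular, for $\lambda\ge1$ we have $\lambda^p\varrh(u)\le\varrh(\lambda u)\le\lambda^q\varrh(u)$, and the inequalities reverse for $\lambda\le1$.

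\emph{Part (i).} By definition, $\norm{u}_{\mcH}=\inf\{\lambda>0:\ f(1/\lambda)\le1\}$. Since $\lambda\mapsto f(1/\lambda)$ is continuous and strictly decreasing from $\infty$ to $0$, there is a unique $\lambda_0$ with $f(1/\lambda_0)=1$. The admissible set $\{\lambda: f(1/\lambda)\le 1\}$ is exactly $[\lambda_0,\infty)$, so its infimum is $\lambda_0$. This gives both directions of (i). The case $u=0$ is excluded in (i), since then $c=0$ and $u/c$ is meaningless; the statement implicitly assumes $c>0$.

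\emph{Part (ii).} Let $c=\norm{u}_{\mcH}$, so that $\varrh(u/c)=1$ by (i). Then $\varrh(u)=f(1)$ and $1=f(1/c)$. Strict monotonicity of $f$ gives $f(1)<1$, $=1$, or $>1$ according as $1<1/c$, $=1/c$, or $>1/c$, that is, according as $c<1$, $=1$, or $>1$. For $u=0$ the claim is trivial.

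\emph{Parts (iii) and (iv).} Write $u=c\,(u/c)$ with $\varrh(u/c)=1$ and apply the two-sided bound above with $\lambda=c$. If $c<1$, this gives $c^q\le\varrh(u)\le c^p$; if $c>1$, it gives $c^p\le\varrh(u)\le c^q$. The case $u=0$ is again trivial.

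\emph{Parts (v) and (vi).} These follow from (iii) and (iv). If $\norm{u_n}_{\mcH}\to0$, then eventually $\norm{u_n}_{\mcH}<1$ and $\varrh(u_n)\le\norm{u_n}_{\mcH}^p\to0$. Conversely, if $\varrh(u_n)\to0$, then eventually $\varrh(u_n)<1$, so $\norm{u_n}_{\mcH}<1$ by (ii), and $\norm{u_n}_{\mcH}\le\varrh(u_n)^{1/q}\to0$. The statement at infinity is proved in the same way using (iv).

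The only genuinely delicate step is establishing the continuity and strict monotonicity of $f$ together with its limits, which is what makes the infimum in the Luxemburg norm attained with equality. This step relies on the finiteness of both integrals and on $u\neq0$. Once these properties are secured, everything else is bookkeeping with the power bounds.
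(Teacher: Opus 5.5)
Your proof is correct. The paper gives no proof of its own: it recalls this proposition as a standard fact from the double phase literature. Your argument is the standard one. The explicit scaling $\lambda\mapsto\lambda^p\int_\Om|u|^p\dd+\lambda^q\int_\Om a(x)|u|^q\dd$ has a unique level-one point, which gives (i)--(ii), and the bounds $\min\{\lambda^p,\lambda^q\}\varrh(u)\le\varrh(\lambda u)\le\max\{\lambda^p,\lambda^q\}\varrh(u)$ then give (iii)--(vi). One small point: strict monotonicity only needs $\int_\Om|u|^p\dd>0$, so the hypothesis $a\ge a_0>0$ is never actually used, and the result holds for $a\ge 0$.
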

In particular,
\begin{equation}\label{eq:min-max-LH}
\min\{\norm{u}_{\mcH}^p,\norm{u}_{\mcH}^q\}
\le
\int_{\Om}(|u|^p+a(x)|u|^q)\,\dd
\le
\max\{\norm{u}_{\mcH}^p,\norm{u}_{\mcH}^q\}.
\end{equation}
\begin{definition}\label{definitionsobolev}
    The Musielak-Orlicz Sobolev space $W^{m,\mathcal{H}}(\Omega)$ consists of all functions $u \in L^\mathcal{H}(\Omega)$ such that $|\nabla^k u| \in L^\mathcal{H}(\Omega)$ for all $k\in\{0,1,2,\cdots,m\}$, endowed with the norm 
\[
\|u\|_{m,\mathcal{H}} = \sum_{k=0}^{m}\|\nabla^ku\|_\mathcal{H}, \]
where we have used the notation $\|\nabla^k u\|_\mathcal{H} = \||\nabla^k u|\|_\mathcal{H}$ for all $k\in\{0,1,2,\cdots,m\}$. It forms a reflexive Banach space. Moreover, the space $W_0^{m,\mathcal{H}}(\Omega)$ is defined as the completion of $C_0^\infty(\Omega)$ with respect to the norm of $W^{m,\mathcal{H}}(\Omega)$, which is also a reflexive Banach space.
\end{definition}

The following Poincar\'{e}-type inequality can be found in Colasuonno--Squassina \cite{Colasuonno-Squassina-2016} and Crespo-Blanco--Gasi\'{n}ski--Harjulehto--Winkert \cite{Crespo-Blanco-Gasinski-Harjulehto-Winkert-2022}.
\begin{theorem}\cite[Theorem 2.2.]{Dixit-Mukherjee-Winkert-2026}\label{thm:poincare}
There exists $C>0$ such that
\[
\norm{u}_{\mcH}\le C \norm{\nabla u}_{\mcH}
\qquad \text{for all } u\in W_0^{1,\mcH}(\Om).
\]
\end{theorem}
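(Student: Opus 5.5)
Since $a(x)\ge a_0>0$ on $\overline\Omega$ and $a$ is bounded (being continuous on the compact set $\overline\Omega$), the density $\mcH$ is comparable to $t^p+t^q$. The plan is therefore to reduce the Poincar\'e inequality for $\norm{\cdot}_{\mcH}$ to the classical Poincar\'e inequalities in $W_0^{1,p}(\Om)$ and $W_0^{1,q}(\Om)$. We work at the level of modulars and then convert to Luxemburg norms via Proposition \ref{prop:modular-norm}. By density of $C_0^\infty(\Om)$ in $W_0^{1,\mcH}(\Om)$ (Definition \ref{definitionsobolev}) and continuity of both sides in the $W^{1,\mcH}$-norm, it suffices to prove the inequality for $u\in C_0^\infty(\Om)$.

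Since both sides are positively homogeneous of degree one, I normalize so that $\norm{\nabla u}_{\mcH}=1$, i.e.\ $\varrh(\nabla u)=1$ by Proposition \ref{prop:modular-norm}(i). The classical Poincar\'e inequalities $\norm{u}_p\le C_p\norm{\nabla u}_p$ and $\norm{u}_q\le C_q\norm{\nabla u}_q$ then apply. Setting $\lambda\ge1$ to be chosen and $\|a\|_\infty=\max_{\overline\Omega}a$, we obtain
\[
\varrh\!\left(\frac{u}{\lambda}\right)\le \lambda^{-p}\left(C_p^p\norm{\nabla u}_p^p+\norm{a}_\infty C_q^q\norm{\nabla u}_q^q\right).
\]
Here we used $\lambda^{-q}\le\lambda^{-p}$. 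Next, $\norm{\nabla u}_p^p\le\varrh(\nabla u)=1$, and since $a\ge a_0$ we also have $\norm{\nabla u}_q^q\le a_0^{-1}\int_\Om a|\nabla u|^q\dd\le a_0^{-1}$. Hence
\[
\varrh(u/\lambda)\le\lambda^{-p}\bigl(C_p^p+\norm{a}_\infty a_0^{-1}C_q^q\bigr).
\]
Choosing $\lambda=\max\{1,(C_p^p+\norm{a}_\infty a_0^{-1}C_q^q)^{1/p}\}=:C$ gives $\varrh(u/C)\le1$. By the definition of the Luxemburg norm this means $\norm{u}_{\mcH}\le C=C\norm{\nabla u}_{\mcH}$. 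Undoing the normalization yields the claim for all $u$, with $u=0$ trivial.

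The only delicate point is the density/closure step. I must check that an element of $W_0^{1,\mcH}(\Om)$ is approximated by $u_n\in C_0^\infty$ with $u_n\to u$ and $\nabla u_n\to\nabla u$ in $L^{\mcH}$. This holds by the very definition of $W_0^{1,\mcH}$ as a completion, so both sides pass to the limit.

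A secondary remark: the lower bound $a\ge a_0>0$ makes the argument elementary, and the assumption $q/p\le N/(N-1)$ is not needed here. Without the positive lower bound on $a$, one would instead need the Colasuonno--Squassina argument based on the embedding $W_0^{1,\mcH}\hookrightarrow L^{\mcH}$.
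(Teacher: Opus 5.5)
Your proof is correct, but it takes a different route from the one behind the statement. The paper gives no proof here. It only cites \cite[Theorem 2.2]{Dixit-Mukherjee-Winkert-2026} and points to Colasuonno--Squassina and Crespo-Blanco et al.

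In those references $a\ge 0$ is allowed to vanish. The standard argument there derives the inequality from the embedding $W_0^{1,\mcH}(\Om)\hookrightarrow L^{\mcH}(\Om)$. That embedding rests on the chain $W_0^{1,\mcH}(\Om)\hookrightarrow W_0^{1,p}(\Om)\hookrightarrow L^{p^*}(\Om)\hookrightarrow L^{q}(\Om)\hookrightarrow L^{\mcH}(\Om)$. The third arrow needs $q<p^*=Np/(N-p)$, which is where a gap condition such as $q/p\le N/(N-1)$ enters.

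You instead use the standing hypothesis $a\ge a_0>0$ together with the boundedness of $a$ on $\overline\Om$. Under these, $\mcH(x,t)$ is comparable to $t^p+t^q$, so in effect $L^{\mcH}(\Om)=L^q(\Om)$ and $W_0^{1,\mcH}(\Om)=W_0^{1,q}(\Om)$ with equivalent norms. Your modular computation is a quantitative version of this, and it reduces everything to the classical Poincar\'e inequalities in $W_0^{1,p}$ and $W_0^{1,q}$.

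The individual steps check out:
\begin{itemize}
\item the normalization $\varrh(\nabla u)=1$ via Proposition \ref{prop:modular-norm}(i);
\item the inequality $\lambda^{-q}\le\lambda^{-p}$ for $\lambda\ge 1$;
\item the bound $\norm{\nabla u}_q^q\le a_0^{-1}\int_\Om a|\nabla u|^q\dd\le a_0^{-1}$;
\item the passage from $C_0^\infty(\Om)$ to $W_0^{1,\mcH}(\Om)$, since both sides are continuous for $\norm{\cdot}_{1,\mcH}$.
\end{itemize}

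Your approach gives an elementary argument with an explicit constant, and it needs neither $q<p^*$ nor any restriction on $q/p$. The cost is generality: it breaks down as soon as $a$ may vanish on a set of positive measure. That is exactly the genuinely double phase situation the cited embedding argument is built to handle. Under this paper's assumptions, though, that loss is harmless.
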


\begin{corollary}\cite[Corollary 2.3]{Dixit-Mukherjee-Winkert-2026}\label{cor:equiv-norm}
The norm $\norm{u}=\norm{\nabla^m u}_{\mcH}$ is equivalent to $\norm{u}_{m,\mcH}$ on $W_0^{m,\mcH}(\Om)$.
\end{corollary}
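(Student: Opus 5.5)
We need to show that $\norm{u}=\norm{\nabla^m u}_{\mcH}$ is a norm on $W_0^{m,\mcH}(\Om)$ and that there are constants $c_1,c_2>0$ with
\[
c_1\norm{u}_{m,\mcH}\le \norm{\nabla^m u}_{\mcH}\le c_2\norm{u}_{m,\mcH}.
\]
The upper bound is immediate with $c_2=1$, since $\norm{\nabla^m u}_{\mcH}$ is one of the summands of $\norm{u}_{m,\mcH}$. It therefore suffices to bound each $\norm{\nabla^k u}_{\mcH}$, $0\le k\le m-1$, by a constant times $\norm{\nabla^m u}_{\mcH}$. I would prove this for $u\in C_0^\infty(\Om)$ and then pass to $W_0^{m,\mcH}(\Om)$ by density, since both sides are continuous with respect to $\norm{\cdot}_{m,\mcH}$. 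Once the lower bound holds, $\norm{\nabla^m u}_{\mcH}=0$ forces $u=0$, so we indeed have a norm.

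\textbf{Step 1: going down one order.} The basic mechanism is Theorem~\ref{thm:poincare}: for $v\in W_0^{1,\mcH}(\Om)$ we have $\norm{v}_{\mcH}\le C\norm{\nabla v}_{\mcH}$. Applying it to $v=\partial^\alpha u$ with $|\alpha|=k-1$ gives $\norm{\nabla^{k-1}u}_{\mcH}\lesssim\norm{\nabla^k u}_{\mcH}$ when $\nabla^k$ denotes the full tensor of $k$-th derivatives. Iterating this reduces everything to controlling the full gradient $D^{m}$ through the iterated operator in \eqref{eq:m-gradient}.

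\textbf{Step 2: the main obstacle.} With the paper's notation, $\nabla^m u$ is $\Delta^{m/2}u$ for $m$ even or $\nabla\Delta^{(m-1)/2}u$ for $m$ odd, not the full derivative tensor. So the key step is a Calder\'on--Zygmund type estimate
\[
\norm{D^2 w}_{\mcH}\le C\norm{\Delta w}_{\mcH}
\]
for $w$ with zero boundary data, applied successively with $w=\Delta^{j}u$. Each $\Delta^j u$ stays compactly supported for $u\in C_0^\infty(\Om)$, so we may extend by zero and work on $\R^N$. For the double phase space I would argue as follows.

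\begin{enumerate}[label=\textnormal{(\alph*)}]
\item Since $a\ge a_0>0$ and $a$ is bounded (it is H\"older continuous on $\overline\Om$), we have $\mcH(x,t)\simeq t^p+t^q$ uniformly in $x$. Hence $L^{\mcH}(\Om)=L^p(\Om)\cap L^q(\Om)=L^q(\Om)$ on the bounded domain $\Om$, with equivalent norms; the equivalence constants depend on $a_0$, $\norm{a}_\infty$ and $|\Om|$.
\item The $L^q$ Calder\'on--Zygmund estimate $\norm{D^2w}_q\le C\norm{\Delta w}_q$ for compactly supported $w$ (via Riesz transforms) then transfers to $\norm{\cdot}_{\mcH}$.
\end{enumerate}

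This uniform ellipticity is what makes the estimate accessible. Without $a_0>0$ one would need Calder\'on--Zygmund theory in Musielak--Orlicz spaces, which is much harder.

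\textbf{Step 3: assembling the chain.} Alternating Step 1 and the Calder\'on--Zygmund estimate, starting from $\norm{\nabla^m u}_{\mcH}$ and descending through $\Delta^{j}u$ and $\nabla\Delta^{j}u$, gives
\[
\norm{u}_{m,\mcH}\le C\norm{\nabla^m u}_{\mcH}
\]
for all $u\in C_0^\infty(\Om)$, where every intermediate norm $\norm{\nabla^k u}_{\mcH}$, $0\le k\le m-1$, is bounded along the way. Density then extends the inequality to all of $W_0^{m,\mcH}(\Om)$, which completes the proof.
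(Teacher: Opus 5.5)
Your proof is correct, but it follows a different route from the paper. The paper gives no argument of its own. It imports the statement from \cite{Dixit-Mukherjee-Winkert-2026} as a corollary of the modular Poincar\'e inequality (Theorem~\ref{thm:poincare}), so the intended proof is to iterate $\norm{v}_{\mcH}\le C\norm{\nabla v}_{\mcH}$ down the chain $\nabla^m u,\nabla^{m-1}u,\dots,u$ directly in $L^{\mcH}(\Om)$.

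You differ from this in two respects.
\begin{itemize}
\item You notice that, with the definition \eqref{eq:m-gradient}, Poincar\'e only yields $\norm{\Delta^j u}_{\mcH}\le C\norm{\nabla\Delta^j u}_{\mcH}$. Bounding $\norm{\nabla\Delta^j u}_{\mcH}$ by $\norm{\Delta^{j+1}u}_{\mcH}$ is an elliptic estimate that Theorem~\ref{thm:poincare} does not contain. You supply it via $\norm{D^2w}\le C\norm{\Delta w}$ and Riesz transforms.
\item You use $a_0\le a\le\norm{a}_\infty$ from \eqref{eq:standing} to identify $L^{\mcH}(\Om)$ with $L^q(\Om)$ up to equivalent norms. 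In effect, this reduces the corollary to the classical statement on $W_0^{m,q}(\Om)$.
\end{itemize}

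The cited route is shorter and stays within the modular framework, but it leaves the step from $\Delta^{j+1}u$ to $\nabla\Delta^j u$ implicit. Your route is self-contained and shows exactly where the ellipticity of $\Delta$ enters. The cost is that it relies on $a$ being bounded away from zero; for a degenerate weight you would need singular-integral bounds on $L^{\mcH}$, which require additional structure.

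As a side remark, the Hessian is not even needed. For $w\in C_0^\infty(\Om)$ one has $\nabla w=\nabla\Gamma*\Delta w$, with $\Gamma$ the fundamental solution. Since $|\nabla\Gamma(x)|\le C|x|^{1-N}$ is integrable on bounded sets, Young's inequality already gives $\norm{\nabla w}_q\le C\norm{\Delta w}_q$.
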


Hence, we shall use $\norm{u}=\norm{\nabla^m u}_{\mcH}$ in $W_0^{m,\mcH}(\Om)$. Moreover,
\begin{equation}\label{eq:min-max-WH}
\min\{\norm{u}^p,\norm{u}^q\}
\le
\int_{\Om}(|\nabla^m u|^p+a(x)|\nabla^m u|^q)\,\dd
\le
\max\{\norm{u}^p,\norm{u}^q\}.
\end{equation}

\begin{proposition}\cite[Proposition 2.10.]{Dixit-Mukherjee-Winkert-2026}\label{prop:embeddings}
Under \eqref{eq:standing}, the following hold:
\begin{enumerate}[label=\textnormal{(\roman*)}]
\item $L^{\mcH}(\Om)\hookrightarrow L^s(\Om)$ and $W_0^{m,\mcH}(\Om)\hookrightarrow W_0^{m,s}(\Om)$ continuously for $s\in[1,p]$;
\item $W_0^{m,\mcH}(\Om)\hookrightarrow L^s(\Om)$ continuously for $s\in[1,p_m^*]$ and compactly for $s\in[1,p_m^*)$;
\item $L^q(\Om)\hookrightarrow L^{\mcH}(\Om)\hookrightarrow L_a^q(\Om)$ continuously.
\end{enumerate}
\end{proposition}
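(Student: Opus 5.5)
I will prove the three items in turn. The only tools are: the standing assumption $a(x)\ge a_0>0$ with $a\in C^{0,\kappa}(\overline\Om)$, hence $a_0\le a\le \|a\|_\infty$; the modular--norm relations of Proposition~\ref{prop:modular-norm}; the norm equivalence of Corollary~\ref{cor:equiv-norm}; and the classical Sobolev and Rellich--Kondrachov theorems for $W_0^{m,s}(\Om)$.

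For (i), take $s\in[1,p]$. Since $\Om$ is bounded, H\"older gives $\norm{u}_s\le|\Om|^{1/s-1/p}\norm{u}_p$. If $\norm{u}_{\mcH}=1$, then $\varrh(u)=1$ by Proposition~\ref{prop:modular-norm}(i), so $\int_\Om|u|^p\dd\le 1$. By homogeneity this gives $\norm{u}_p\le \norm{u}_{\mcH}$ for all $u\in L^{\mcH}(\Om)$. (Alternatively, $t^p\le\mcH(x,t)$ yields $L^p\preceq$-domination, and Proposition~\ref{prop:embedding-general} applies.) Applying this to each $\nabla^k u$, $k\le m$, gives $\norm{u}_{W^{m,s}}\le C\norm{u}_{m,\mcH}$. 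Because $C_0^\infty(\Om)$ is dense in $W_0^{m,\mcH}(\Om)$ by definition, and $W_0^{m,s}(\Om)$ is closed, the image lies in $W_0^{m,s}(\Om)$.

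For (ii), compose (i) with $s=p$ and the classical higher-order Sobolev embedding $W_0^{m,p}(\Om)\hookrightarrow L^{r}(\Om)$. This embedding is continuous for $r\le p_m^*$ (note $mp<mq<N$) and compact for $r<p_m^*$ by Rellich--Kondrachov. For $r\in[1,p_m^*)$, compactness then follows because a continuous map composed with a compact one is compact.

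For (iii), the first inclusion uses $\mcH(x,t)\le t^p+\|a\|_\infty t^q\le C(1+t^q)$ together with $|\Om|<\infty$. This gives the domination $\mcH\preceq t^q$ in the sense of Definition~\ref{def:dominance}, with an $L^1$ function equal to a constant. Proposition~\ref{prop:embedding-general} then yields $L^q\hookrightarrow L^{\mcH}$. The second inclusion uses $a(x)t^q\le\mcH(x,t)$: if $\norm{u}_{\mcH}=1$, then $\norm{u}_{q,a}^q\le\varrh(u)=1$, and homogeneity gives $\norm{u}_{q,a}\le\norm{u}_{\mcH}$.

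The only step needing care is the first inclusion in (iii). There Proposition~\ref{prop:embedding-general} is stated for $N$-functions, and $t^q$ is one, so it applies. If one prefers a direct argument, let $\norm{u}_q=1$; then $\varrh(u)\le|\Om|^{1-p/q}+\|a\|_\infty$, and Proposition~\ref{prop:modular-norm}(iv) bounds $\norm{u}_{\mcH}$ by a constant. Note that the lower bound $a_0>0$ actually makes $L^{\mcH}=L^q$ with equivalent norms. Nothing here is a genuine obstacle.
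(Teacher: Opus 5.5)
The paper gives no proof of this proposition; it quotes it from Proposition~2.10 of Dixit--Mukherjee--Winkert. Your argument is a correct, self-contained proof along the standard route:
\begin{itemize}
\item the pointwise bounds $t^p\le\mcH(x,t)$ and $a(x)t^q\le\mcH(x,t)\le C(1+t^q)$ become norm inequalities via homogeneity and Proposition~\ref{prop:modular-norm} (or Proposition~\ref{prop:embedding-general});
\item (ii) then reduces to the classical Sobolev and Rellich--Kondrachov theorems on $W_0^{m,p}(\Om)$.
\end{itemize}
Compared with the bare citation, your proof shows that only $0\le a\le\norm{a}_\infty$, $|\Om|<\infty$ and $mp<N$ are used. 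Neither the H\"older continuity of $a$ nor $q/p\le N/(N-1)$ plays a role. Your closing remark is also stronger than you state it. Since $a_0t^q\le\mcH(x,t)\le C(1+t^q)$, one has $L^{\mcH}(\Om)=L^q(\Om)$, hence $W_0^{m,\mcH}(\Om)=W_0^{m,q}(\Om)$, with equivalent norms. So under \eqref{eq:standing}, item (ii) even holds with $q_m^*$ in place of $p_m^*$.

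One step in (i) needs tightening. In this paper $\nabla^k$ is the Laplacian-type operator of \eqref{eq:m-gradient}, not the full array of $k$-th order derivatives. Applying $L^{\mcH}\hookrightarrow L^s$ to each $\nabla^k u$ therefore only controls $\sum_{k}\norm{\nabla^k u}_s$, not the genuine $W^{m,s}$ norm. To bound all $D^\alpha u$ with $|\alpha|\le m$, you need the estimate $\norm{D^\alpha u}_s\le C\norm{\nabla^m u}_s$ on $C_0^\infty(\Om)$. This follows from Calder\'on--Zygmund plus Poincar\'e for $1<s<\infty$, but fails at $s=1$ once $m\ge2$ (Ornstein). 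The fix is to do this step at $s=p>1$, which is exactly the paper's convention of norming $W_0^{m,s}(\Om)$ by $\norm{\nabla^m(\cdot)}_s$ for $1<s<\infty$. Then apply H\"older to each $D^\alpha u$ to reach $s\in[1,p)$. The same estimate at exponent $p$ is what justifies composing (i) with the classical Sobolev embedding in (ii).
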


\begin{remark}
By Proposition \ref{prop:embeddings} (ii), for each $s\in[1,p_m^*]$, there exists $\mcS_s>0$ such that
\begin{equation}\label{embedding}
\norm{u}_s \le \mcS_s \norm{u}
\qquad \text{for all } u\in W_0^{m,\mcH}(\Om).
\end{equation}
\end{remark}

We conclude this section by collecting some auxiliary estimates which will be repeatedly used in the sequel.
The following estimates follow from assumptions \ref{M1} and \ref{g2}. According to \ref{M1}, there exists a constant $C_0>0$ such that
\begin{equation}\label{eq:M-estimate}
\gamma \mcM(t)-tM(t)\ge -C_0
\qquad \text{for all } t\ge 0.
\end{equation}
Moreover, from \ref{g2} for every $\varepsilon>0$ there exists $C_\varepsilon>0$ such that
\begin{equation}\label{eq:g-estimate}
|g(t)|\le \varepsilon |t|^{p_m^*-1}+C_\varepsilon,
\qquad
|G(t)|\le \varepsilon |t|^{p_m^*}+C_\varepsilon
\qquad \text{for all } t\in\R~.
\end{equation}
Furthermore, for all $t,w\in\R$, we derive 
\begin{equation}\label{eq:g-difference}
|g(t)(t-w)|
\le
\varepsilon\bigl(|t|^{p_m^*}+|t-w|^{p_m^*}\bigr)+C_\varepsilon|t-w|.
\end{equation}

\begin{definition}
\begin{enumerate}
    \item The functional $J$ is said to satisfy the Palais--Smale condition, abbreviated $(PS)$, if any sequence $\{u_n\}\subset W_0^{m,\mcH}(\Om)$ such that $\{J(u_n)\}$ is bounded and
\[
J'(u_n)\to 0 \quad \text{in } \bigl(W_0^{m,\mcH}(\Om)\bigr)^*
\]
admits a strongly convergent subsequence.
\item The functional $J$ is said to satisfy the Cerami condition, abbreviated $(C)$, if any sequence $\{u_n\}\subset W_0^{m,\mcH}(\Om)$ such that $\{J(u_n)\}$ is bounded and
\[
(1+\|u_n\|)\|J'(u_n)\|\to 0
\]
admits a strongly convergent subsequence.
\end{enumerate}
\end{definition}
Multiple solutions for \eqref{eq:main-problem} are provided by abstract results, like the following symmetric mountain pass theorem. 
 \begin{theorem}\cite{Rabinowitz-1986}\label{symmetricmountainpasstheorem}
Let $E$ be a real infinite-dimensional Banach space and let $I\in C^1(E,\R)$ be an even functional satisfying either the $(PS)$ condition or the $(C)$ condition. Assume that $E=E^-\oplus E^+$, where $E^-$ is finite dimensional, and that:
\begin{enumerate}[label=\textnormal{(\roman*)}]
\item $I(0)=0$;
\item there exist $\alpha,\rho>0$ such that
\[
I(u)\ge \alpha
\quad \text{for all } u\in E^+ \text{ with } \|u\|=\rho;
\]
\item for every finite-dimensional subspace $W\subset E$, there exists $R=R(W)>0$ such that
\[
I(u)\le 0
\quad \text{for all } u\in W \text{ with } \|u\|\ge R.
\]
\end{enumerate}
Then $I$ admits an unbounded sequence of critical values.
\end{theorem}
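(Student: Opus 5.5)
The plan is to follow Rabinowitz's genus-based minimax scheme. Since the statement is cited from \cite{Rabinowitz-1986}, I only outline the standard construction. Using (iii), it is adapted to work with either $(PS)$ or $(C)$.

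\textbf{Minimax classes.} Fix an increasing sequence of finite-dimensional subspaces $E_k\supset E^-$ with $\dim E_k=\dim E^-+k$, which exists because $E$ is infinite-dimensional. By (iii) choose $R_k>\rho$ with $I\le 0$ on $E_k\setminus B_{R_k}$, and set $D_k=\overline{B_{R_k}}\cap E_k$. Let
\[
G_k=\{h\in C(D_k,E):\ h \text{ odd},\ h=\mathrm{id} \text{ on } \partial B_{R_k}\cap E_k\}.
\]
For $j\ge1$ let $\Gamma_j$ be the family of sets $\overline{h(D_k\setminus Y)}$, where $k\ge j$, $h\in G_k$, and $Y\subset D_k$ is closed and symmetric with genus $\gamma(Y)\le k-j$. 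Define
\[
c_j=\inf_{A\in\Gamma_j}\ \sup_{u\in A}I(u).
\]
Three elementary facts follow directly from the genus axioms (monotonicity, subadditivity, continuity under odd maps): $\Gamma_{j+1}\subset\Gamma_j$, so $c_j\le c_{j+1}$; $\Gamma_j$ is invariant under odd homeomorphisms that fix $\{I\le 0\}$ pointwise; and removing a symmetric set of genus $\le i$ maps $\Gamma_{j+i}$ into $\Gamma_j$.

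\textbf{Lower bound via intersection.} The key estimate is an intersection lemma: every $A\in\Gamma_j$ meets $\partial B_\rho\cap E^+$. I would prove it using a Borsuk--Ulam argument. Let $P$ be the projection onto $E^-$. Consider the bounded symmetric neighbourhood of $0$ in $E_k$ given by
\[
\mathcal O=\{x\in D_k:\ \|h(x)\|<\rho\}.
\]
The map $x\mapsto P h(x)$ is odd on $\partial\mathcal O$, so the set $\{x\in\partial\mathcal O:\ Ph(x)=0\}$ has genus at least $k$. Only genus $\le k-j$ was removed, so this set is not contained in $Y$, which gives a point of $A$ in $\partial B_\rho\cap E^+$. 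With the index shift $\dim E_k-\dim E^-=k$, this yields $c_j\ge\alpha>0$ for all $j$, using (ii). Finiteness of $c_j$ is immediate from $\mathrm{id}|_{D_j}\in G_j$.

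\textbf{Criticality and unboundedness.} Next I use the quantitative deformation lemma. Under $(PS)$ it is classical. Under $(C)$ one builds the pseudo-gradient flow with the rescaled vector field $(1+\|u\|)^{-1}\times$(pseudo-gradient); the Cerami condition exactly guarantees a lower bound on $(1+\|u\|)\|I'(u)\|$ away from critical points, so the flow lines stay controlled. Since $c_j>0$, choose $\bar\varepsilon<c_j$; then the odd deformation fixes $\{I\le0\}$ and preserves $\Gamma_j$. This shows each $c_j$ is a critical value.

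For unboundedness, suppose $c_j\uparrow\bar c<\infty$. The set $K_{\bar c}$ is compact by the compactness condition and symmetric, and $0\notin K_{\bar c}$ because $I(0)=0<\alpha\le\bar c$. Hence $K_{\bar c}$ has finite genus $i$, and so does a closed symmetric neighbourhood $N$ of it. Deforming $\{I\le\bar c+\varepsilon\}\setminus N$ into $\{I\le\bar c-\varepsilon\}$, then starting from $A\in\Gamma_{j+i}$ with $\sup_A I\le\bar c+\varepsilon$ and $j$ large enough that $c_j>\bar c-\varepsilon$, produces a set in $\Gamma_j$ with supremum below $c_j$. This is a contradiction, so $c_j\to\infty$.

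\textbf{Main obstacle.} I expect the intersection lemma (the genus computation on $\partial\mathcal O$) and the Cerami version of the deformation lemma to be the delicate steps. The remaining steps are routine genus bookkeeping.
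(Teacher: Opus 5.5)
Your outline is Rabinowitz's genus-minimax proof: the classes $\Gamma_j$ built from odd maps in $G_k$ with removed sets of genus at most $k-j$, the intersection lemma via the genus of $\partial\mathcal O$ and the zero set of $Ph$, and the deformation argument for $c_j\to\infty$, with the harmless index shift $\dim E_k=\dim E^-+k$ giving $c_j\ge\alpha$ for all $j\ge1$. The paper gives no proof of its own and simply cites Rabinowitz for this statement, so your approach is essentially the same and is correct; the only detail to fix is the Cerami deformation, where the standard flow multiplies the normalized pseudo-gradient by $(1+\|u\|)$ rather than dividing by it, so that $(1+\|u\|)\|I'(u)\|\ge\delta$ gives a uniform decay rate of $I$ while the field grows at most linearly and the flow stays global.
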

We also recall some basic notation of Krasnoselskii's genus theory, which will be used later in the multiplicity results.
Let E be a Banach space and
\[
\Gamma:=\{A\subset E\setminus\{0\}:\ A \text{ is closed and symmetric with respect to 0}\}.
\]

\begin{definition}\cite{Rabinowitz-1986}
For $A\in \Gamma$, the genus $\gamma(A)$ is defined as the least integer $k\in\N$ such that there exists an odd continuous mapping
\[
\phi:A\to \R^k\setminus\{0\}.
\]
\end{definition}

\begin{theorem}\cite{Rabinowitz-1986}\label{genustheorem}
Let $I\in C^1(E,\R)$ be an even functional and satisfy the $(PS)$ condition. For each $k\in\N^*$, and $c\in\mathbb{R}$ set
\[
\Gamma_k:=\{A\in\Gamma:\ \gamma(A)\ge k\},
\qquad
c_k:=\inf_{A\in\Gamma_k}\sup_{u\in A}I(u).
\]
Then:
\begin{enumerate}[label=\textnormal{(\roman*)}]
\item if $\Gamma_k\neq\emptyset$ and $-\infty<c_k<0$, then $c_k$ is a critical value of $I$;
\item if $c_k=\cdots=c_{k+\ell}=c$ for some $\ell\in\N$ and $c\neq I(0)$, then
\[
\gamma\bigl(\{u\in E:\ I'(u)=0,\ I(u)=c\}\bigr)\ge \ell+1.
\]
\end{enumerate}
\end{theorem}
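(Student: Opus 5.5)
The statement is the classical genus multiplicity result, and I would prove both parts with the equivariant deformation lemma together with the standard properties of the Krasnoselskii genus. From the genus I need four facts. It is monotone under odd continuous maps: if $\phi\colon A\to B$ is odd and continuous, then $\gamma(A)\le\gamma(B)$. It is subadditive: $\gamma(A\cup B)\le\gamma(A)+\gamma(B)$. It gives $\gamma(\overline{A\setminus B})\ge\gamma(A)-\gamma(B)$ for $A,B\in\Gamma$. Finally, a compact $K\in\Gamma$ has finite genus and admits a closed symmetric $\delta$-neighbourhood $N_\delta(K)\in\Gamma$ with $\gamma(N_\delta(K))=\gamma(K)$.

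The deformation lemma I would invoke is the following. Let $I$ be even and satisfy $(PS)$, let $K_c=\{u: I'(u)=0,\ I(u)=c\}$, and let $N$ be any symmetric neighbourhood of $K_c$ ($N=\emptyset$ if $K_c=\emptyset$). Then for every sufficiently small $\bar\varepsilon>0$ there exist $\varepsilon\in(0,\bar\varepsilon)$ and an odd homeomorphism $\eta\colon E\to E$ with two properties: $\eta(u)=u$ whenever $I(u)\notin[c-\bar\varepsilon,c+\bar\varepsilon]$, and $\eta(I^{c+\varepsilon}\setminus N)\subset I^{c-\varepsilon}$. Here $I^{d}=\{I\le d\}$.

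\textbf{Part (i).} Assume $-\infty<c_k<0$ and suppose $c:=c_k$ is not a critical value, so $K_c=\emptyset$. I choose $\bar\varepsilon<|c|$, so that $I(0)=0\notin[c-\bar\varepsilon,c+\bar\varepsilon]$ and hence $\eta(0)=0$. By the definition of $c_k$ there is $A\in\Gamma_k$ with $\sup_A I\le c+\varepsilon$. The set $\eta(A)$ has the following properties:
\begin{itemize}
\item it is closed and symmetric, because $\eta$ is an odd homeomorphism;
\item it avoids $0$, because $\eta$ is injective with $\eta(0)=0$ and $0\notin A$;
\item $\gamma(\eta(A))\ge\gamma(A)\ge k$, by monotonicity of the genus under the odd map $\eta$.
\end{itemize}
Hence $\eta(A)\in\Gamma_k$. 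But $\sup_{\eta(A)}I\le c-\varepsilon<c_k$, which contradicts the definition of $c_k$.

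\textbf{Part (ii).} Assume $c_k=\dots=c_{k+\ell}=c\ne I(0)$. By $(PS)$ the set $K_c$ is compact; it is symmetric because $I$ is even, and $0\notin K_c$ because $c\neq I(0)$. Hence $K_c\in\Gamma$ has finite genus. Suppose, for contradiction, that $\gamma(K_c)\le\ell$, and take $N=N_\delta(K_c)$ with $\gamma(N)=\gamma(K_c)\le\ell$.

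Choose $\bar\varepsilon<|c-I(0)|$, so again $\eta(0)=0$, and apply the deformation lemma with this $N$. Pick $A\in\Gamma_{k+\ell}$ with $\sup_A I\le c+\varepsilon$. Then $B:=\overline{A\setminus N}$ satisfies:
\begin{itemize}
\item $B\in\Gamma$, being closed, symmetric and contained in $A$, which avoids $0$;
\item $\gamma(B)\ge(k+\ell)-\ell=k$, by the genus fact for $\overline{A\setminus B}$;
\item $\sup_B I\le c+\varepsilon$.
\end{itemize}
To make $\eta(B)\subset I^{c-\varepsilon}$ literally correct, I would take $N$ to be the interior of a slightly larger neighbourhood, so that $B$ lies outside an open symmetric neighbourhood of $K_c$ to which the lemma applies. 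As in part (i), $\eta(B)\in\Gamma_k$, while $\sup_{\eta(B)}I\le c-\varepsilon<c=c_k$. This is a contradiction, so $\gamma(K_c)\ge\ell+1$.

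The main technical obstacle is the deformation lemma itself. One must build an odd, locally Lipschitz pseudo-gradient vector field on the set of regular points of $I$ in the Banach space $E$. Oddness comes from replacing a pseudo-gradient $v(u)$ by $\tfrac12\bigl(v(u)-v(-u)\bigr)$. The field is then cut off outside $I^{-1}([c-\bar\varepsilon,c+\bar\varepsilon])\setminus N$. $(PS)$ is what guarantees a uniform lower bound on $\|I'\|$ in the strip away from $N$, which allows a definite decrease of $I$ along the flow. The remaining steps are bookkeeping with the genus properties listed above.
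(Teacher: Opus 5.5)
Your argument is correct and is the standard proof from Rabinowitz's monograph; the paper itself gives no proof and only cites that source. The proof combines the equivariant deformation lemma with the mapping, subadditivity, excision and continuity properties of the genus, exactly as you set them out.

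Two points in your write-up can be simplified.

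In part (i) you use $I(0)=0$, which is not a hypothesis of the statement, but it is not needed. Any odd $\eta$ satisfies $\eta(0)=0$, so injectivity of $\eta$ already gives $0\notin\eta(A)$. Moreover, $K_c=\emptyset$ automatically forces $c\neq I(0)$, because $I'(0)=0$ for every even $C^1$ functional.

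In part (ii) the inclusion $\eta(\overline{A\setminus N})\subset I^{c-\varepsilon}$ follows directly from the continuity of $\eta$ and the closedness of $I^{c-\varepsilon}$. So you do not need to adjust $N$ to an open neighbourhood.
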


\section{Existence of Infinitely Many Solutions}
In this section, we establish the compactness and geometric properties of the energy functional associated with problem \eqref{eq:main-problem} with $h(x,u)=K(x)g(u)$, where 
\begin{equation}\label{eq:weightK}
K\in L^\infty(\Omega),
\qquad
K(x)>0 \quad \text{for a.e. }x\in\Omega.
\end{equation}
Concerning the nonlinearity $g$, we assume that it is continuous and satisfies:
\begin{enumerate}[label=\textnormal{(g\arabic*)},leftmargin=2.2cm]
\item\label{g1}
There exists $C>0$ such that
\[
C|g(t)|^{(p_m^*)'}\le tg(t)-\gamma q\,G(t)
\qquad \text{for all }t\in\R,
\]
where
\[
G(t)=\int_0^t g(s)\,\mathrm{d}s,
\qquad
(p_m^*)'=\frac{p_m^*}{p_m^*-1}.
\]

\item\label{g2}
\[
\lim_{|t|\to\infty}\frac{g(t)}{|t|^{p_m^*-1}}=0~~.
\]

\item\label{g3}
$g$ is odd, i.e.,
\[
g(-t)=-g(t)\qquad \text{for all }t\in\R~
\]

\item\label{g4}
\[
\lim_{|t|\to\infty}\frac{G(t)}{|t|^{\gamma q}}=\infty~~.
\]
\end{enumerate}
First, we prove that the functional satisfies the appropriate compactness condition. Then we verify the symmetric mountain-pass geometry and obtain infinitely many weak solutions.

\begin{definition}\label{de-new}
A function $u\in W_0^{m,\mcH}(\Om)$ is said to be a weak solution of \eqref{eq:main-problem} if
\begin{equation}\label{eq:weak-solution}
M\!\left(\Phi_{\mcH}(\nabla^m u)\right)
\int_{\Om}
\Big(
|\nabla^m u|^{p-2}\nabla^m u
+a(x)|\nabla^m u|^{q-2}\nabla^m u
\Big)\cdot \nabla^m\varphi\,\dd
=
\int_{\Om}K(x)g(u)\varphi\,\dd
\end{equation}
for all $\varphi\in W_0^{m,\mcH}(\Om)$, where
\begin{equation}\label{eq:PhiH}
\Phi_{\mcH}(\nabla^m u)
=
\int_{\Om}\left(\frac{|\nabla^m u|^p}{p}
+a(x)\frac{|\nabla^m u|^q}{q}\right)\dd.
\end{equation}
\end{definition}
The associated energy functional $J\colon W_0^{m,\mcH}(\Om)\to\R$ is given by
\begin{equation}\label{eq:energy-functional}
J(u)
=
\mcM\!\left(\Phi_{\mcH}(\nabla^m u)\right)
-
\int_{\Om}K(x)G(u)\,\dd.
\end{equation}
It is well known that critical points of $J$ correspond to weak solutions of \eqref{eq:main-problem}.

\begin{lemma}\cite[Proposition 2.3]{ElAhmadi-Ayoujil-Berrajaa-2024}\label{lem:S+}
Let $\Psi(u)=\Phi_{\mcH}(\nabla^m u)$, recall \eqref{eq:PhiH}. Then
$\Psi'\colon W_0^{m,\mcH}(\Om)\to (W_0^{m,\mcH}(\Om))^*$
is bounded, strictly monotone, and of type $(S_+)$.
\end{lemma}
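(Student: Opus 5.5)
Write $E=W_0^{m,\mcH}(\Om)$, $\Psi(u)=\Phi_{\mcH}(\nabla^m u)$ and $A(u)=\int_\Om\bigl(|\nabla^m u|^{p}+a(x)|\nabla^m u|^{q}\bigr)\dd$. The three claims are proved in order: boundedness, strict monotonicity, and then the $(S_+)$ property. The starting point is the formula
\[
\langle \Psi'(u),v\rangle=\int_\Om\Big(|\nabla^m u|^{p-2}\nabla^m u+a(x)|\nabla^m u|^{q-2}\nabla^m u\Big)\cdot\nabla^m v\,\dd ,
\]
which follows from differentiating under the integral sign. This is justified by dominated convergence, since $\mcH(x,\cdot)$ is $C^1$ with derivative of the same double phase type.

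\textbf{Boundedness.} Use Young's inequality, or H\"older's inequality in $L^{p}$ and in the weighted space $L^q_a$. This gives
\[
|\langle\Psi'(u),v\rangle|\le \|\nabla^m u\|_p^{p-1}\|\nabla^m v\|_p+\|\nabla^m u\|_{q,a}^{q-1}\|\nabla^m v\|_{q,a}.
\]
Each $L^p$ and $L^q_a$ norm of a gradient is bounded by a power of the modular $A(\cdot)$. By \eqref{eq:min-max-WH}, this modular is in turn bounded by $\max\{\|\cdot\|^p,\|\cdot\|^q\}$. Hence $\|\Psi'(u)\|_*\le C\bigl(\|u\|^{p-1}+\|u\|^{q-1}\bigr)$, so $\Psi'$ maps bounded sets to bounded sets.

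\textbf{Strict monotonicity.} For $s>1$ the vector inequality
\[
\bigl(|\xi|^{s-2}\xi-|\eta|^{s-2}\eta\bigr)\cdot(\xi-\eta)\ge0
\]
holds, with equality only when $\xi=\eta$. This follows from strict convexity of $\xi\mapsto|\xi|^s$. Apply it with $s=p$ and with $s=q$, using $a\ge a_0>0$ for the second term.
\begin{itemize}
\item This yields $\langle\Psi'(u)-\Psi'(v),u-v\rangle\ge0$.
\item If equality holds, then $\nabla^m u=\nabla^m v$ a.e.
\item By Corollary \ref{cor:equiv-norm}, $\nabla^m u=\nabla^m v$ forces $u=v$, which gives strictness.
\end{itemize}
In the odd-$m$ case $\nabla^m u$ is vector valued and in the even case it is scalar; the inequality covers both.

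\textbf{The $(S_+)$ property.} Let $u_n\rightharpoonup u$ in $E$ with $\limsup\langle\Psi'(u_n),u_n-u\rangle\le0$.
\begin{itemize}
\item Weak convergence gives $\langle\Psi'(u),u_n-u\rangle\to0$, so $\lim\langle\Psi'(u_n)-\Psi'(u),u_n-u\rangle=0$.
\item The integrand of this pairing is nonnegative, so it tends to $0$ in $L^1$. After passing to a subsequence it tends to $0$ a.e.
\item The pointwise coercivity of the map $\xi\mapsto|\xi|^{p-2}\xi+a|\xi|^{q-2}\xi$ then gives $\nabla^m u_n\to\nabla^m u$ a.e.
\end{itemize}

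The main step is upgrading this to modular convergence. Write the integrand as $\sigma(\xi_n)-$ (cross terms), where $\sigma(\xi)=|\xi|^p+a|\xi|^q$. By Young's inequality,
\[
\big||\xi_n|^{p-2}\xi_n\cdot\xi\big|\le \tfrac{p-1}{p}|\xi_n|^p+\tfrac1p|\xi|^p ,
\]
and similarly for the $q$-terms. Combined with the $L^1$ convergence above, this gives $\limsup A(u_n)\le A(u)$. Fatou's lemma gives the reverse inequality, so $A(u_n)\to A(u)$.

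With a.e. convergence and convergence of the modulars, a Brezis–Lieb/Vitali argument gives $A(u_n-u)\to0$. Equivalently, apply dominated convergence to $\mcH(x,|\nabla^m u_n-\nabla^m u|)\le 2^{q-1}\bigl(\mcH(x,|\nabla^m u_n|)+\mcH(x,|\nabla^m u|)\bigr)$ via the generalized DCT. Proposition \ref{prop:modular-norm}(v) then turns modular convergence into $u_n\to u$ in norm. Since the limit is unique, the whole sequence converges.

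I expect this last modular-to-norm step, and in particular the tightness/uniform integrability used in the generalized dominated convergence, to be the main obstacle. I would handle it with the explicit pointwise bound by $\mcH(x,|\nabla^m u_n|)+\mcH(x,|\nabla^m u|)$, whose integrals converge.
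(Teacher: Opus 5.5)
Your proof is correct, and it is more than the paper provides. The paper gives no argument for this lemma; it imports it from \cite[Proposition 2.3]{ElAhmadi-Ayoujil-Berrajaa-2024}.

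Your self-contained version shows why the result holds in the polyharmonic setting. Every step works on the field $\nabla^m u\in L^{\mcH}(\Om)$: the derivative formula, the H\"older bounds, strict monotonicity of $\xi\mapsto|\xi|^{p-2}\xi+a(x)|\xi|^{q-2}\xi$, the a.e.\ convergence and the modular argument. The only input about the order $m$ is that $u\mapsto\nabla^m u$ is linear and, by Corollary~\ref{cor:equiv-norm}, norm-preserving for $\|u\|=\|\nabla^m u\|_{\mcH}$. So it covers scalar ($m$ even) and vector-valued ($m$ odd) $\nabla^m u$ at once.

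Your route through a.e.\ convergence, convergence of modulars and the generalized dominated convergence theorem has two further advantages. It avoids the case split $p\ge2$ versus $1<p<2$ that arises when $(S_+)$ is proved via Simon-type vector inequalities. It also needs only $a\ge0$, since strictness already comes from the $p$-phase. The citation is shorter, but it leaves the reader to check that the cited setting matches the operator $\nabla^m$ of \eqref{eq:m-gradient}.

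One step needs to be stated more carefully. Write $\xi_n=\nabla^m u_n$ and $\xi=\nabla^m u$. After using $\langle\Psi'(u),u_n-u\rangle\to0$, apply Young's inequality with weights $\tfrac{p-1}{p},\tfrac1p$ (and $\tfrac{q-1}{q},\tfrac1q$) to $\int_\Om\bigl(|\xi_n|^{p-2}\xi_n+a|\xi_n|^{q-2}\xi_n\bigr)\cdot\xi\,\dd$. This yields $\limsup_n\Psi(u_n)\le\Psi(u)$, not $\limsup_n A(u_n)\le A(u)$. Since $p\neq q$, these are different combinations of the two phases.

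To conclude, apply Fatou to each phase separately. Set $X_n=\int_\Om|\xi_n|^p\,\dd$ and $Y_n=\int_\Om a|\xi_n|^q\,\dd$. Then $\liminf X_n\ge X$, $\liminf Y_n\ge Y$ and $\limsup(X_n/p+Y_n/q)\le X/p+Y/q$. This forces $X_n\to X$ and $Y_n\to Y$, hence $A(u_n)\to A(u)$.

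Alternatively, a.e.\ convergence plus boundedness give $|\xi_n|^{p-2}\xi_n\rightharpoonup|\xi|^{p-2}\xi$ in $L^{p'}$ and $a^{1/q'}|\xi_n|^{q-2}\xi_n\rightharpoonup a^{1/q'}|\xi|^{q-2}\xi$ in $L^{q'}$. That identifies the limit of the cross term directly.

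The rest goes through as you wrote: the domination $\mcH(x,|\xi_n-\xi|)\le 2^{q-1}\bigl(\mcH(x,|\xi_n|)+\mcH(x,|\xi|)\bigr)$ with convergent integrals, followed by Proposition~\ref{prop:modular-norm}(v).
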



\begin{proposition}\label{prop:PS-C}
Assume \eqref{eq:standing}, \eqref{eq:weightK}, \textnormal{\ref{M1}--\ref{M2}} and
\textnormal{\ref{g1}--\ref{g2}}. Then:
\begin{enumerate}[label=\textnormal{(\roman*)}]
\item if \(p\ge 2\), the functional \(J\) satisfies the Palais--Smale condition in \(W_0^{m,\mcH}(\Om)\);
\item if \(1<p<2\), the functional \(J\) satisfies the Cerami condition in \(W_0^{m,\mcH}(\Om)\).
\end{enumerate}
\end{proposition}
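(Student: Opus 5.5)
The plan follows the standard scheme without the Ambrosetti--Rabinowitz condition: first prove that (PS) or (C) sequences are bounded, then obtain strong convergence from the $(S_+)$ property of $\Psi'$ in Lemma~\ref{lem:S+}. Let $\{u_n\}$ satisfy $|J(u_n)|\le c$ and $J'(u_n)\to0$ (resp.\ $(1+\|u_n\|)\|J'(u_n)\|\to0$).

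\textbf{Boundedness.} Argue by contradiction and suppose $\|u_n\|\to\infty$ along a subsequence. Consider
\[
\gamma q\,J(u_n)-\langle J'(u_n),u_n\rangle .
\]
The Kirchhoff part is $\gamma q\,\mcM(\Psi(u_n))-M(\Psi(u_n))\langle\Psi'(u_n),u_n\rangle$. Since $\langle\Psi'(u),u\rangle=\int(|\nabla^m u|^p+a|\nabla^m u|^q)\le q\Psi(u)$ and $M\ge0$, this part is at least $q\bigl(\gamma\mcM(t_n)-t_nM(t_n)\bigr)\ge -qC_0$ by \eqref{eq:M-estimate}, where $t_n=\Psi(u_n)$. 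The nonlinear part equals $\int K(g(u_n)u_n-\gamma q G(u_n))$, and by \ref{g1} this is at least $C\int K|g(u_n)|^{(p_m^*)'}$.

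In the Cerami case $\langle J'(u_n),u_n\rangle\to0$, so $\int K|g(u_n)|^{(p_m^*)'}$ is bounded. In the PS case with $p\ge2$ we only get a bound of order $C+o(1)\|u_n\|$; we will then have to compare this with a lower bound on the growth of $t_nM(t_n)$. This is the step where $p\ge2$ enters, and I expect it to be the main obstacle, so I would try to run the same argument as in the Cerami case while absorbing the linear error.

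Next, use \ref{M2} together with $\Psi(u_n)\ge \|u_n\|^p/q\to\infty$, which gives $M(t_n)\ge k_1$ for large $n$. Then
\[
k_1\min\{\|u_n\|^p,\|u_n\|^q\}\le M(t_n)\langle\Psi'(u_n),u_n\rangle=\langle J'(u_n),u_n\rangle+\int Kg(u_n)u_n .
\]
By Hölder's inequality, $\int Kg(u_n)u_n\le \|K\|_\infty^{1/p_m^*}\bigl(\int K|g(u_n)|^{(p_m^*)'}\bigr)^{1/(p_m^*)'}\mcS_{p_m^*}\|u_n\|$. In the Cerami case this yields $\|u_n\|^p\le C\|u_n\|$, a contradiction since $p>1$. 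In the PS case the same inequality becomes $\|u_n\|^p\le C(1+\|u_n\|)^{1/(p_m^*)'}\|u_n\|+o(1)\|u_n\|$. The exponent on the right is $1+1/(p_m^*)'=2-1/p_m^*<2\le p$, which is exactly where $p\ge2$ is used, and again this gives a contradiction. Hence $\{u_n\}$ is bounded.

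\textbf{Strong convergence.} By reflexivity and Proposition~\ref{prop:embeddings}(ii), we may pass to a subsequence with $u_n\rightharpoonup u$ weakly and $u_n\to u$ in $L^s$ for all $s<p_m^*$. Because of the critical exponent, the term $\int Kg(u_n)(u_n-u)$ is handled through \eqref{eq:g-difference}: it is bounded by $\varepsilon\cdot\sup_n(\ldots)+C_\varepsilon\|u_n-u\|_1$, and since $\varepsilon$ is arbitrary this gives $\int Kg(u_n)(u_n-u)\to0$. Together with $\langle J'(u_n),u_n-u\rangle\to0$ this yields
\[
M(t_n)\langle\Psi'(u_n),u_n-u\rangle\to0 .
\]
If $t_n\to0$ along a subsequence, then $u_n\to0$ strongly and we are done. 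Otherwise $t_n\ge\sigma>0$, so \ref{M2} gives $M(t_n)\ge k_\sigma$, hence $\limsup\langle\Psi'(u_n),u_n-u\rangle\le0$, and the $(S_+)$ property from Lemma~\ref{lem:S+} gives $u_n\to u$ strongly.
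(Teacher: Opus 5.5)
Your proposal is correct and follows the paper's proof essentially step by step. You use the identity for $\gamma q\,J(u_n)-\langle J'(u_n),u_n\rangle$ together with \eqref{eq:M-estimate} and \ref{g1} to bound $\int K|g(u_n)|^{(p_m^*)'}$. Boundedness then comes from H\"older's inequality and the exponent comparison $2-1/p_m^*<2\le p$, or simply $p>1$ in the Cerami case. Strong convergence comes from the $\varepsilon$-estimate derived from \ref{g2} followed by the $(S_+)$ property. The only difference is cosmetic: you obtain $M(t_n)\ge k_1$ directly from $t_n\to\infty$ under the contradiction hypothesis, which replaces the paper's case split on $\inf_n\|u_n\|$.
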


\begin{proof}
We prove \textnormal{(i)} by considering two cases. Let \(\{u_n\}\subset W_0^{m,\mcH}(\Om)\) be a Palais--Smale sequence for \(J\), that is,
\begin{equation}\label{eq:PS-seq-h}
\sup_{n\in\N}|J(u_n)|<\infty,
\qquad
J'(u_n)\to0
\quad \text{in } \bigl(W_0^{m,\mcH}(\Om)\bigr)^*.
\end{equation}

\noindent\textit{Case 1.} \(\inf_{n\in\N}\|u_n\|=d>0\).

Define
\[
\Phi_n:=\Phi_{\mcH}(\nabla^m u_n)
=
\int_{\Om}\left(\frac{|\nabla^m u_n|^p}{p}
+a(x)\frac{|\nabla^m u_n|^q}{q}\right)\dd,
\]
and
\[
A_n:=\int_{\Om}\bigl(|\nabla^m u_n|^p+a(x)|\nabla^m u_n|^q\bigr)\dd.
\]
By \eqref{eq:min-max-WH},
\[
\Phi_n\ge \frac1q\min\{\|u_n\|^p,\|u_n\|^q\}
\ge \frac1q\min\{d^p,d^q\}=:\sigma_d.
\]
Hence, by \textnormal{\ref{M2}}, there exists \(k_d>0\) such that
\begin{equation}\label{eq:M-lower-h}
M(\Phi_n)\ge k_d
\qquad \text{for all } n\in\N.
\end{equation}

We show that \(\{u_n\}\) is bounded in \(W_0^{m,\mcH}(\Om)\). Since
\[
\ip{J'(u_n)}{u_n}
=
M(\Phi_n)A_n-\int_{\Om}K(x)g(u_n)u_n\,\dd,
\]
it follows from \eqref{eq:M-lower-h} and \eqref{eq:min-max-WH} that
\begin{equation}\label{eq:basic-bound-h}
k_d\min\{\|u_n\|^p,\|u_n\|^q\}
\le
\bigl|\ip{J'(u_n)}{u_n}\bigr|
+
\int_{\Om}K(x)|g(u_n)||u_n|\,\dd.
\end{equation}

Moreover,
\begin{align}
\int_{\Om}K(x)\bigl(g(u_n)u_n-\gamma q\,G(u_n)\bigr)\,\dd
&=
\gamma q\,J(u_n)-\ip{J'(u_n)}{u_n}
+\bigl(M(\Phi_n)A_n-\gamma q\,\mcM(\Phi_n)\bigr).
\label{eq:key-id-h}
\end{align}
Since \(A_n\le q\Phi_n\), \eqref{eq:M-estimate} yields
\[
M(\Phi_n)A_n-\gamma q\,\mcM(\Phi_n)
\le
q\Phi_n M(\Phi_n)-\gamma q\,\mcM(\Phi_n)
\le qC_0.
\]
Hence, using \eqref{eq:PS-seq-h} and
\[
\bigl|\ip{J'(u_n)}{u_n}\bigr|
\le \|J'(u_n)\|\,\|u_n\|
=o(1)\|u_n\|,
\]
we obtain
\begin{equation}\label{eq:gq-bound-h}
\int_{\Om}K(x)\bigl(g(u_n)u_n-\gamma q\,G(u_n)\bigr)\,\dd
\le
C_1+o(1)\|u_n\|
\end{equation}
for some constant \(C_1>0\).

By \textnormal{\ref{g1}}, there exists \(C_2>0\) such that
\[
C_2|g(t)|^{(p_m^*)'}
\le t\,g(t)-\gamma q\,G(t)
\qquad \text{for all } t\in\R.
\]
Multiplying by \(K(x)\ge0\) and integrating, we infer from \eqref{eq:gq-bound-h} that
\begin{equation}\label{eq:gLp-bound-h}
\int_{\Om}K(x)|g(u_n)|^{(p_m^*)'}\,\dd
\le
C_3\bigl(1+\|u_n\|\bigr)
\end{equation}
for some constant \(C_3>0\).

Using H\"older's inequality, \(K\in L^\infty(\Om)\), and Proposition \ref{prop:embeddings}, we have
\[
\int_{\Om}K(x)|g(u_n)||u_n|\,\dd
\le
C\left(\int_{\Om}K|g(u_n)|^{(p_m^*)'}\,\dd\right)^{\frac1{(p_m^*)'}}
\|u_n\|.
\]
Combining this with \eqref{eq:basic-bound-h}, \eqref{eq:gLp-bound-h}, and \eqref{eq:PS-seq-h}, we obtain
\begin{equation}\label{eq:main-boundedness-h}
k_d\min\{\|u_n\|^p,\|u_n\|^q\}
\le
o(1)\|u_n\|
+
C\bigl(1+\|u_n\|\bigr)^{\frac1{(p_m^*)'}}\|u_n\|.
\end{equation}

Suppose, by contradiction, that \(\|u_n\|\to\infty\). Then, for \(n\) large enough,
\[
\min\{\|u_n\|^p,\|u_n\|^q\}=\|u_n\|^p.
\]
Dividing \eqref{eq:main-boundedness-h} by \(\|u_n\|^p\), we get
\[
k_d
\le
o(1)\|u_n\|^{1-p}
+
C\bigl(1+\|u_n\|\bigr)^{\frac1{(p_m^*)'}}\|u_n\|^{1-p}.
\]
Since
\[
1-p+\frac1{(p_m^*)'}<0,
\]
the right-hand side tends to \(0\), which contradicts \(k_d>0\). Hence \(\{u_n\}\) is bounded in \(W_0^{m,\mcH}(\Om)\).

Therefore, passing to a subsequence, there exists \(u\in W_0^{m,\mcH}(\Om)\) such that
\[
u_n\rightharpoonup u
\quad \text{in } W_0^{m,\mcH}(\Om).
\]
By Proposition \ref{prop:embeddings}, we also have
\[
u_n\to u
\quad \text{in } L^1(\Om)
\quad \text{and a.e. in } \Om.
\]

We now prove the strong convergence. Observe that
\[
\ip{J'(u_n)}{u_n-u}
=
M(\Phi_n)\ip{\Psi'(u_n)}{u_n-u}
-
\int_{\Om}K(x)g(u_n)(u_n-u)\,\dd.
\]
Since \(J'(u_n)\to0\), it is enough to show that
\begin{equation}\label{eq:reaction-vanish-h}
\int_{\Om}K(x)g(u_n)(u_n-u)\,\dd\to0.
\end{equation}

Hence, by H\"older's inequality, \eqref{eq:g-estimate} and Proposition \ref{prop:embeddings},
\begin{align*}
\int_{\Om}K(x)|g(u_n)||u_n-u|\,\dd
&\le
\varepsilon \|K\|_\infty
\int_{\Om}|u_n|^{p_m^*-1}|u_n-u|\,\dd
+
C_\varepsilon \|K\|_\infty \|u_n-u\|_{1} \\
&\le
\varepsilon C
+
C_\varepsilon \|K\|_\infty \|u_n-u\|_{1},
\end{align*}
where \(C>0\) is independent of \(n\) and \(\varepsilon\). Since \(u_n\to u\) in \(L^1(\Om)\), we obtain
\[
\limsup_{n\to\infty}
\int_{\Om}K(x)|g(u_n)||u_n-u|\,\dd
\le \varepsilon C.
\]
Letting \(\varepsilon\to0\), we get \eqref{eq:reaction-vanish-h}.

Consequently,
\[
M(\Phi_n)\ip{\Psi'(u_n)}{u_n-u}\to0.
\]
If \(\Phi_n\to0\), then \(\|u_n\|\to0\) by \eqref{eq:min-max-WH}, and hence \(u_n\to0\) strongly in \(W_0^{m,\mcH}(\Om)\).

Otherwise, there exist \(\delta>0\) and a subsequence, still denoted by \(\{u_n\}\), such that
\[
\Phi_n\ge \delta>0.
\]
By \textnormal{\ref{M2}}, there exists \(k_\delta>0\) such that
\[
M(\Phi_n)\ge k_\delta>0.
\]
Therefore,
\[
\ip{\Psi'(u_n)}{u_n-u}\to0.
\]
Since \(\Psi'\) is of type \((S_+)\) by Lemma \ref{lem:S+}, we conclude that
\[
u_n\to u
\quad \text{strongly in } W_0^{m,\mcH}(\Om).
\]
Thus \(J\) satisfies the Palais--Smale condition.

\noindent\textit{Case 2.} \(\inf_{n\in\N}\|u_n\|=0\).

If \(0\) is an accumulation point of \(\{\|u_n\|\}\), then there exists a subsequence, still denoted by \(\{u_n\}\), such that
\[
\|u_n\|\to0.
\]
Hence \(u_n\to0\) strongly in \(W_0^{m,\mcH}(\Om)\). Otherwise, there exists \(d>0\) such that
\[
\|u_n\|\ge d
\]
for all sufficiently large \(n\), and therefore Case 1 applies. This proves \textnormal{(i)}.

We prove \textnormal{(ii)} similarly. Let \(\{u_n\}\subset W_0^{m,\mcH}(\Om)\) be a Cerami sequence for \(J\), that is,
\[
\sup_{n\in\N}|J(u_n)|<\infty,
\qquad
(1+\|u_n\|)\|J'(u_n)\|\to0.
\]
Then
\[
\bigl|\ip{J'(u_n)}{u_n}\bigr|
\le \|J'(u_n)\|\,\|u_n\|
\le (1+\|u_n\|)\|J'(u_n)\|\to0.
\]
Hence, in \eqref{eq:gq-bound-h}, the term \(o(1)\|u_n\|\) is replaced by \(o(1)\). Consequently, instead of \eqref{eq:gLp-bound-h}, we obtain
\[
\int_{\Om}K(x)|g(u_n)|^{(p_m^*)'}\,\dd
\le C
\]
for some constant \(C>0\). Therefore, arguing as in Case 1, we get
\[
k_d\min\{\|u_n\|^p,\|u_n\|^q\}
\le
o(1)+C\|u_n\|.
\]
If \(\|u_n\|\to\infty\), then, for \(n\) large enough,
\[
\min\{\|u_n\|^p,\|u_n\|^q\}=\|u_n\|^p.
\]
Dividing the above inequality by \(\|u_n\|^p\), we obtain
\[
k_d
\le
o(1)\|u_n\|^{-p}+C\|u_n\|^{1-p}.
\]
Since \(p>1\), the right-hand side tends to \(0\), which contradicts \(k_d>0\). Hence \(\{u_n\}\) is bounded in \(W_0^{m,\mcH}(\Om)\).

Once boundedness is established, we have
\[
\|J'(u_n)\|\to0.
\]
Thus, the strong convergence argument used in \textnormal{(i)} applies without any change. Therefore \(J\) satisfies the Cerami condition in \(W_0^{m,\mcH}(\Om)\).
\end{proof}

\begin{lemma}\label{lem:geometry}
Assume \textnormal{\ref{g2}} and \textnormal{\ref{M2}} hold. Then, for every $\rho>0$, there exists a finite-dimensional subspace $E^- \subset W_0^{m,\mcH}(\Om)$ and a constant $\alpha>0$ such that
\[
J(u)\ge \alpha
\qquad \text{for all } u\in E^+ \text{ with } \|u\|=\rho,
\]
where $E^+$ is a topological complement of $E^-$, that is,
\[
W_0^{m,\mcH}(\Om)=E^- \oplus E^+.
\]
\end{lemma}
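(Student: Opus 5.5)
The plan is the standard Fountain-type argument. I make the nonlinearity small on a complement of a finite-dimensional space by using the compact embedding, and I let the Kirchhoff term control $J$ from below through \ref{M2}.

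\textbf{Step 1 (lower bound on the Kirchhoff part).} Fix $\rho>0$ and let $\|u\|=\rho$. By \eqref{eq:min-max-WH},
\[
\Phi_{\mcH}(\nabla^m u)\ge \tfrac1q\min\{\rho^p,\rho^q\}=:\sigma_\rho .
\]
Assumption \ref{M2} gives $M(\tau)\ge k_{\sigma_\rho}$ for $\tau\ge\sigma_\rho$. Since $M\ge0$, this yields
\[
\mcM\bigl(\Phi_{\mcH}(\nabla^m u)\bigr)\ge \int_{\sigma_\rho/2}^{\sigma_\rho}M(\tau)\,d\tau\ge \tfrac{\sigma_\rho}{2}k_{\sigma_\rho/2}=:2\alpha>0 .
\]
This constant depends only on $\rho$.

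\textbf{Step 2 (splitting the reaction term).} By \eqref{eq:g-estimate}, for $\varepsilon>0$ we have $|G(t)|\le\varepsilon|t|^{p_m^*}+C_\varepsilon$. The constant term must be removed, since it does not vanish on the sphere. Because $g$ is continuous, $|G(t)|\le C|t|$ for $|t|\le1$, and \eqref{eq:g-estimate} handles $|t|\ge1$. Together these give
\[
|G(t)|\le \varepsilon|t|^{p_m^*}+C_\varepsilon|t| \qquad\text{for all } t\in\R .
\]
Using \eqref{embedding} for the critical term, this gives
\[
\int_\Om K|G(u)|\,\dd\le \|K\|_\infty\bigl(\varepsilon\mcS_{p_m^*}^{p_m^*}\rho^{p_m^*}+C_\varepsilon\|u\|_1\bigr).
\]
Choose $\varepsilon$ so that the first term is at most $\alpha/2$.

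\textbf{Step 3 (main obstacle: a complement on which $\|u\|_1$ is small).} I need a finite-dimensional $E^-$ and a complement $E^+$ with
\[
\beta_k:=\sup\{\|u\|_1:\ u\in E^+,\ \|u\|=1\}\quad\text{arbitrarily small}.
\]
Let $E=W_0^{m,\mcH}(\Om)$, which is reflexive and separable. Take a biorthogonal system $\{e_j\}\subset E$, $\{e_j^*\}\subset E^*$ with $\overline{\operatorname{span}}\{e_j\}=E$ and $\{e_j^*\}$ total. Set
\[
E^-=\operatorname{span}\{e_1,\dots,e_k\},\qquad E^+=\overline{\operatorname{span}}\{e_j:j>k\}=\bigcap_{j\le k}\ker e_j^* .
\]
Then $E=E^-\oplus E^+$ topologically, via the projection $\sum_{j\le k}e_j^*(u)e_j$.

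To show $\beta_k\to0$, argue by contradiction. Suppose $u_k\in E^+_k$, $\|u_k\|=1$ and $\|u_k\|_1\ge\beta>0$. Reflexivity gives $u_k\rightharpoonup u$ along a subsequence. For each fixed $j$, $e_j^*(u_k)=0$ once $k\ge j$, so $e_j^*(u)=0$ for all $j$, hence $u=0$ by totality. The compact embedding of Proposition \ref{prop:embeddings}(ii) into $L^1$ then gives $u_k\to0$ in $L^1$, a contradiction. Separability and the existence of such a biorthogonal system are the points I would cite, as in Fan--Han or Zhao's construction for Musielak spaces.

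\textbf{Step 4 (conclusion).} Choose $k$ with $\|K\|_\infty C_\varepsilon\beta_k\rho\le\alpha/2$. Then for $u\in E^+$ with $\|u\|=\rho$,
\[
J(u)\ge 2\alpha-\tfrac{\alpha}{2}-\tfrac{\alpha}{2}=\alpha .
\]
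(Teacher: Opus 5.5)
Your proof is correct and has the same overall structure as the paper's. Both use the lower bound $\mcM(\Phi_{\mcH}(\nabla^m u))\ge\int_{\sigma_\rho/2}^{\sigma_\rho}M(\tau)\,d\tau>0$ from \ref{M2}. Both then argue by contradiction that the reaction term is small on the $\rho$-sphere of a tail complement, because any weak limit of tail vectors is $0$.

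The two technical ingredients differ.

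\textbf{Handling the reaction term.} The paper proves directly that $\sup_{u\in S_k(\rho)}\int_\Om K|G(u)|\,dx\to0$, using a.e.\ convergence and Vitali's theorem. The constant $C_\varepsilon$ in $|G(t)|\le\varepsilon|t|^{p_m^*}+C_\varepsilon$ is absorbed through uniform integrability. You instead remove that constant via $|G(t)|\le\varepsilon|t|^{p_m^*}+C_\varepsilon|t|$, which also follows at once by integrating the bound on $g$ in \eqref{eq:g-estimate}. What remains is a linear statement: the compact embedding $W_0^{m,\mcH}(\Om)\hookrightarrow L^1(\Om)$ has norm tending to $0$ on the tail subspaces. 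This needs no pointwise convergence and no Vitali, and the tail quantity you obtain is independent of both $\rho$ and $G$.

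\textbf{Choice of basis.} The paper takes a Schauder basis with $F_k=\ker P_k$, imported from the literature, and gets $u=0$ from the expansion $u=\lim_j P_j u$. You use a Markushevich basis, where totality of $\{e_j^*\}$ does the work of the expansion. This needs only separability and reflexivity, since every separable Banach space admits such a system. A Schauder basis is a strictly stronger requirement, as not every separable Banach space has one.

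Two small points you left implicit are easily filled. Separability holds because $C_0^\infty(\Om)$ is dense by definition and is separable for the $W^{m,q}$-norm, which dominates the $W^{m,\mcH}$-norm by Proposition~\ref{prop:embeddings}(iii). Passing from $\beta_k\not\to0$ to a lower bound valid for all $k$ uses the monotonicity of $\beta_k$, which follows from $E^+_{k+1}\subset E^+_k$, exactly as in the paper.
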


\begin{proof}
Let $\{e_j\}_{j\in\N}$ be a Schauder basis of $W_0^{m,\mcH}(\Om)$ (see \cite[Lemma 6.5]{Arora-CrespoBlanco-Winkert-2025a}). Then each
$u\in W_0^{m,\mcH}(\Om)$ admits a unique representation
\[
u=\sum_{j=1}^\infty a_j e_j.
\]
For each $k\in\N^*$, set
\[
E_k:=\mathrm{span}\{e_1,\dots,e_k\},
\]
and let $P_k:W_0^{m,\mcH}(\Om)\to E_k$ be the corresponding continuous linear projection,
\[
P_k(u)=\sum_{j=1}^k a_j e_j.
\]
Denote by
\[
F_k:=\ker(P_k).
\]
Then $F_k$ is a topological complement of $E_k$, namely
\[
W_0^{m,\mcH}(\Om)=E_k\oplus F_k.
\]

Fix $\rho>0$ and define
\[
S_k(\rho):=\{u\in F_k:\|u\|=\rho\},
\qquad
\beta_k:=\sup_{u\in S_k(\rho)}\int_{\Om}K(x)|G(u)|\,dx.
\]
We claim that
\begin{equation}\label{eq:beta-to-0}
\beta_k\to0
\qquad \text{as } k\to\infty.
\end{equation}

Assume by contradiction that there exist $\delta_0>0$ and $u_k\in S_k(\rho)$ such that
\[
\int_{\Om}K(x)|G(u_k)|\,dx \ge \delta_0
\qquad \text{for all }k\in\mathbb N.
\]
Since $\|u_k\|=\rho$, the sequence $\{u_k\}$ is bounded in $W_0^{m,\mcH}(\Om)$. Hence, up to a subsequence,
\[
u_k \rightharpoonup u
\quad \text{in } W_0^{m,\mcH}(\Om),
\]
\[
u_k\to u
\quad \text{in } L^s(\Om)\ \text{for all } s<p_m^*,
\]
and
\[
u_k(x)\to u(x)
\quad \text{a.e. in } \Om.
\]

Fix $j\in\N^*$. Since
\[
P_j\circ P_k=P_j
\qquad \text{for all } k\ge j,
\]
we have $F_k\subset F_j$ whenever $k\ge j$. Therefore,
\[
P_j(u_k)=0
\qquad \text{for all } k\ge j.
\]
Passing to the limit and using the continuity of $P_j$, we obtain
\[
P_j(u)=0
\qquad \text{for every } j\in\N^*.
\]
Hence,
\[
u=\lim_{j\to\infty}P_j(u)=0.
\]
Consequently,
\[
u_k\to0
\qquad \text{a.e. in } \Om.
\]

Now, by the estimate \eqref{eq:g-estimate}
\begin{equation}\label{eq:G-eps-used}
|G(t)|\le \varepsilon |t|^{p_m^*}+C_\varepsilon
\qquad \text{for all } t\in\R,
\end{equation}
so for every measurable set $A\subset\Om$ we have
\[
\int_A K(x)|G(u_k)|\,dx
\le
\|K\|_\infty
\left(
\varepsilon \int_{\Om}|u_k|^{p_m^*}\,dx + C_\varepsilon |A|
\right).
\]
Since $\{u_k\}$ is bounded in $L^{p_m^*}(\Om)$, the family $\{K(x)|G(u_k)|\}$ is uniformly integrable. Together with the pointwise convergence and continuity of $G(t)$
\[
K(x)|G(u_k)|\to0
\qquad \text{a.e. in } \Om,
\]
Thus, Vitali's theorem yields
\[
\int_{\Om}K(x)|G(u_k)|\,dx \to0,
\]
which contradicts the choice of $\delta_0$. Therefore \eqref{eq:beta-to-0} holds.

We now estimate the Kirchhoff term from below. Set
\[
\eta_\rho:=\frac1q\min\{\rho^p,\rho^q\}>0.
\]
If $\|u\|=\rho$, then by \eqref{eq:min-max-WH},
\[
\Phi_{\mcH}(\nabla^m u)
=
\int_{\Om}\left(\frac{|\nabla^m u|^p}{p}+a(x)\frac{|\nabla^m u|^q}{q}\right)\,dx
\ge
\frac1q\int_{\Om}\bigl(|\nabla^m u|^p+a(x)|\nabla^m u|^q\bigr)\,dx
\ge \eta_\rho.
\]
Let
\[
\delta_\rho:=\frac{\eta_\rho}{2}.
\]
By \textnormal{\ref{M2}}, there exists $m_{\delta_\rho}>0$ such that
\[
M(t)\ge m_{\delta_\rho}
\qquad \text{for all } t\ge \delta_\rho.
\]
Hence,
\[
\mcM(\Phi_{\mcH}(\nabla^m u))
\ge
\mcM(\eta_\rho)
=
\int_0^{\eta_\rho}M(t)\,dt
\ge
\int_{\delta_\rho}^{\eta_\rho}M(t)\,dt
\ge
m_{\delta_\rho}\frac{\eta_\rho}{2}
=:c_\rho>0.
\]

Therefore, for every $u\in S_k(\rho)$,
\[
J(u)
=
\mcM(\Phi_{\mcH}(\nabla^m u))
-
\int_{\Om}K(x)G(u)\,dx
\ge
c_\rho-\int_{\Om}K(x)|G(u)|\,dx
\ge
c_\rho-\beta_k.
\]
By \eqref{eq:beta-to-0}, we may choose $k_0$ sufficiently large such that
\[
\beta_{k_0}\le \frac{c_\rho}{2}.
\]
Setting
\[
E^-:=E_{k_0},
\qquad
E^+:=F_{k_0},
\qquad
\alpha:=\frac{c_\rho}{2},
\]
we obtain
\[
J(u)\ge \alpha
\qquad \text{for all } u\in E^+ \text{ with } \|u\|=\rho.
\]
This completes the proof.
\end{proof}
We now present our first main result.
\begin{theorem}\label{thm:multiple-solutions}
Assume that \textnormal{\ref{M1}--\ref{M2}} hold and that
$g$ satisfies \textnormal{\ref{g1}--\ref{g4}}. Then the functional
$J$ admits an unbounded sequence of critical values. In particular, in the separable case $h(x,u)=K(x)g(u)$,
problem \eqref{eq:main-problem} possesses infinitely many distinct pairs
of weak solutions
\[
\{u_j,-u_j\}_{j\in\N}.
\]
\end{theorem}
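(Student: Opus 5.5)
We will apply the symmetric mountain pass theorem (Theorem \ref{symmetricmountainpasstheorem}) with $E=W_0^{m,\mcH}(\Om)$, which is an infinite-dimensional reflexive Banach space, and $I=J$.

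\textbf{Step 1: regularity, evenness and compactness.} $J$ is $C^1$ because $\mcM$ is $C^1$ and the map $u\mapsto\Phi_{\mcH}(\nabla^m u)$ is $C^1$. The term $\int_\Om K G(u)\,\dd$ is $C^1$ by the subcritical growth \eqref{eq:g-estimate} together with Proposition \ref{prop:embeddings}. By \ref{g3}, $G$ is even, so $J$ is even, and clearly $J(0)=\mcM(0)-0=0$. Proposition \ref{prop:PS-C} then gives the (PS) condition when $p\ge2$ and the (C) condition when $1<p<2$. Either one is admissible in Theorem \ref{symmetricmountainpasstheorem}.

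\textbf{Step 2: the sphere condition.} Hypothesis (ii) of Theorem \ref{symmetricmountainpasstheorem} is exactly Lemma \ref{lem:geometry}. We fix any $\rho>0$ and take the splitting $E=E^-\oplus E^+$ and the constant $\alpha>0$ provided there.

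\textbf{Step 3: behaviour on finite-dimensional subspaces.} This is the main step and uses \ref{M1} and \ref{g4}.

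First, \ref{M1} shows that $t\mapsto \mcM(t)t^{-\gamma}$ is nonincreasing on $[t_0,\infty)$. Hence $\mcM(t)\le C_1t^\gamma+C_2$ for all $t\ge0$.

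For $\|u\|\ge1$, \eqref{eq:min-max-WH} gives $\Phi_{\mcH}(\nabla^m u)\le \frac1p\|u\|^q$. Therefore
\[
\mcM\bigl(\Phi_{\mcH}(\nabla^m u)\bigr)\le C_3\|u\|^{\gamma q}+C_2 .
\]

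Now let $W\subset E$ be finite-dimensional. All norms on $W$ are equivalent, so $\|u\|\le C_W\bigl(\int_\Om K|u|^{\gamma q}\,\dd\bigr)^{1/(\gamma q)}$ on $W$. This is a genuine norm on $W$ because $K>0$ a.e. By \ref{g4} and the continuity of $G$, for every $L>0$ there is $C_L$ with $G(t)\ge L|t|^{\gamma q}-C_L$ for all $t\in\R$. Hence, for $u\in W$ with $\|u\|\ge1$,
\[
J(u)\le C_3\|u\|^{\gamma q}+C_2-L\,C_W^{-\gamma q}\|u\|^{\gamma q}+C_L\|K\|_\infty|\Om|.
\]
We choose $L$ so large that $LC_W^{-\gamma q}>C_3+1$. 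Then $J(u)\le -\|u\|^{\gamma q}+C$, and this is $\le0$ once $\|u\|\ge R(W)$.

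The delicate point is that the growth exponent coming from the Kirchhoff term is $\gamma q$, namely the top phase $q$ raised to $\gamma$. This exponent matches exactly the superlinearity threshold in \ref{g4}, so the constants must be ordered as above: first $C_W$, then $L$.

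\textbf{Conclusion.} Theorem \ref{symmetricmountainpasstheorem} yields an unbounded sequence of critical values $c_j=J(u_j)\to\infty$. Each $u_j$ is a weak solution in the sense of Definition \ref{de-new}. Since $J$ is even, $-u_j$ is also a critical point. Passing to a subsequence with distinct critical values gives infinitely many distinct pairs $\{u_j,-u_j\}$, and these are nontrivial because $c_j\neq0=J(0)$ for large $j$.
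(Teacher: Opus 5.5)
Your proposal is correct and follows the paper's proof essentially step for step. You obtain evenness and $J(0)=0$ from \textnormal{\ref{g3}}, compactness from Proposition~\ref{prop:PS-C}, the sphere condition from Lemma~\ref{lem:geometry}, and condition (iii) from norm equivalence on $W$, the bound $G(t)\ge L|t|^{\gamma q}-C_L$ from \textnormal{\ref{g4}}, and the estimate $\mcM(\Phi_{\mcH}(\nabla^m u))\le C(1+\|u\|^{\gamma q})$ from \textnormal{\ref{M1}}. Your derivation of this last estimate via the monotonicity of $t\mapsto \mcM(t)t^{-\gamma}$ for large $t$ simply supplies a detail the paper states without proof.
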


\begin{proof}
We verify the assumptions of the symmetric mountain pass theorem \ref{symmetricmountainpasstheorem}. First, by \textnormal{\ref{g3}}, the function $g$ is odd, hence $G$ is even. Therefore
the functional $J$ is even. Moreover, since $G(0)=0$, one has
$J(0)=0$. Next, Proposition \ref{prop:PS-C} shows that $J$ satisfies the Palais--Smale condition
if $p\ge2$, and the Cerami condition if $1<p<2$. Furthermore, Lemma \ref{lem:geometry} yields that condition \textnormal{(ii)} of the symmetric mountain pass theorem \ref{symmetricmountainpasstheorem}. It remains to verify that the third geometric condition holds. Let 
$W\subset W_0^{m,\mcH}(\Om)$ be any finite-dimensional subspace. 
Since all norms are equivalent on $W$, there exists $c_W>0$ such that
\begin{equation}\label{eq:norm-equivalence-W}
\left(\int_{\Om}K(x)|u|^{\gamma q}\,dx\right)^{\frac1{\gamma q}}
\ge c_W \|u\|
\qquad \text{for all } u\in W.
\end{equation}

By \textnormal{\ref{g4}}, for every $A>0$ there exists $C_A>0$ such that
\begin{equation}\label{eq:G-lower}
G(t)\ge A|t|^{\gamma q}-C_A
\qquad \text{for all } t\in\R.
\end{equation}

On the other hand, using \textnormal{\ref{M1}} and \eqref{eq:min-max-WH}, there exists $C>0$ such that
\[
\mcM(\Phi_{\mcH}(\nabla^m u))
\le C(1+\|u\|^{\gamma q})
\qquad \text{for all } \|u\|>1.
\]

Hence, for $u\in W$ with $\|u\|>1$, we obtain
\begin{align*}
J(u)
&=
\mcM(\Phi_{\mcH}(\nabla^m u))
-
\int_{\Om}K(x)G(u)\,dx \\
&\le
C(1+\|u\|^{\gamma q})
-
A\int_{\Om}K(x)|u|^{\gamma q}\,dx
+
C_A \\
&\le
\bigl(C-Ac_W^{\gamma q}\bigr)\|u\|^{\gamma q}+C_1.
\end{align*}

Choosing $A>0$ sufficiently large, we deduce that
\[
J(u)\to -\infty
\qquad \text{as } \|u\|\to\infty,\ u\in W.
\]
Thus, there exists $R>0$ such that
\[
J(u)\le 0
\qquad \text{for all } u\in W \text{ with } \|u\|\ge R.
\]

Therefore, the third geometric condition is satisfied. 
The conclusion follows from the symmetric mountain pass theorem.
\end{proof}

\section{A Mountain Pass Solution}
In this section, we study the  problem \eqref{eq:main-problem} i.e.
\begin{equation}\label{eq:main-problem-sec4}
\begin{cases}
M\!\left(\Phi_{\mcH}(\nabla^m u)\right)\mathcal{L}^{m}_{p,q,a}u
=
h(x,u),
& \text{in } \Omega,\\[0.2cm]
u=\nabla u=\cdots=\nabla^{m-1}u=0,
& \text{on } \partial\Omega.
\end{cases}
\end{equation}
Here $h:\Omega\times\mathbb R\to\mathbb R$ is a continuous function such that
\[
H(x,t):=\int_0^t h(x,s)\,ds.
\]
We impose the following assumptions on the nonlinearity $h$:
\begin{enumerate}[label=\textnormal{(h\arabic*)},leftmargin=2.1cm]

\item\label{h1}
There exists $C>0$ such that
\[
C|h(x,t)|^{(p_m^*)'}
\le
t h(x,t)-\gamma q\,H(x,t)
\]
for a.e. $x\in\Omega$ and all $t\in\mathbb R$.

\item\label{h2}
\[
\lim_{|t|\to\infty}
\frac{h(x,t)}{|t|^{p_m^*-1}}=0
\qquad
\text{uniformly for a.e. }x\in\Omega.
\]

\item\label{h3}
$h$ is odd, i.e.,
\[
h(x,-t)=-h(x,t)\qquad \text{for all }t\in\R~\text{and for a.e. }x\in\Omega.
\]

\item\label{h4}
\[
\lim_{|t|\to\infty}
\frac{H(x,t)}{|t|^{\gamma q}}
=\infty\qquad
\text{uniformly for a.e. }x\in\Omega.
\]
\end{enumerate}

In addition to \textnormal{\ref{M1}}--\textnormal{\ref{M2}}, we assume:

\begin{enumerate}[label=\textnormal{(M\arabic*)},leftmargin=2.1cm]
\setcounter{enumi}{2}
\item\label{M3}
There exists $C_0>0$ such that
\[
\mcM(t)\ge C_0\, t^\gamma
\qquad \text{for all } t\ge 0.
\]
\end{enumerate}

The associated energy functional corresponding to \eqref{eq:main-problem-sec4} is
\begin{equation}\label{eq:J-sec4}
\mathcal{J}(u)
=
\mcM\!\left(\Phi_{\mcH}(\nabla^m u)\right)
-
\int_{\Omega}H(x,u)\,dx,
\qquad
u\in W_0^{m,\mcH}(\Omega).
\end{equation}
\subsection{\texorpdfstring{The $\gamma q$-growth case}{The gamma q-growth case}}
In analogy with the quantity introduced in \cite{Harrabi-Hamdani-Fiscella-2024}, we define
\begin{equation}\label{eq:lambdaH}
\lambda_{\mcH,q}
:=
\inf_{\substack{u\in W_0^{m,\mcH}(\Om)\\ u\neq 0}}
\frac{\mcM\!\left(\Phi_{\mcH}(\nabla^m u)\right)}
{\displaystyle\int_{\Om}|u|^{\gamma q}\,dx}.
\end{equation}

\begin{lemma}\label{lem:lambdaH-positive}
Assume \textnormal{\ref{M3}} holds. Then $\lambda_{\mcH,q}>0$.
\end{lemma}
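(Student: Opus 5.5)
The plan is to compare numerator and denominator in \eqref{eq:lambdaH} through the $q$-phase alone. This is possible because $a(x)\ge a_0>0$ in \eqref{eq:standing}.

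A first idea would be to bound $\Phi_{\mcH}(\nabla^m u)$ from below via \eqref{eq:min-max-WH} and then use \eqref{embedding} with $s=\gamma q$. This is admissible, since $\gamma q<p_m^*$ by \ref{M1}. It works for $\|u\|\le 1$, where $\Phi_{\mcH}(\nabla^m u)^\gamma\ge q^{-\gamma}\|u\|^{\gamma q}$. For $\|u\|>1$, however, it only gives $\|u\|^{\gamma p}$ in the numerator against $\|u\|^{\gamma q}$ in the denominator, so it fails for large $u$. The key step is therefore a lower bound that is homogeneous of the correct degree for every $u$.

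I would proceed as follows. For $u\in W_0^{m,\mcH}(\Om)\setminus\{0\}$, assumption \ref{M3} gives
\[
\mcM\bigl(\Phi_{\mcH}(\nabla^m u)\bigr)\ge C_0\,\Phi_{\mcH}(\nabla^m u)^\gamma .
\]
Dropping the $p$-term and using $a\ge a_0$ yields
\[
\Phi_{\mcH}(\nabla^m u)\ge \frac{a_0}{q}\int_{\Om}|\nabla^m u|^q\,dx=\frac{a_0}{q}\,\|\nabla^m u\|_q^q .
\]
In particular $|\nabla^m u|\in L^q(\Om)$, and the same argument applied to $\nabla^k u$, $k\le m$, together with density of $C_0^\infty(\Om)$, shows $W_0^{m,\mcH}(\Om)\hookrightarrow W_0^{m,q}(\Om)$. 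This is consistent with Proposition \ref{prop:embeddings}(iii), since $L^q_a$ coincides with $L^q$ when $a\ge a_0$.

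Next I would invoke the classical Sobolev embedding $W_0^{m,q}(\Om)\hookrightarrow L^{s}(\Om)$ for $s\le q_m^*$, valid because $mq<N$. Here $W_0^{m,q}(\Om)$ is normed by $\|\nabla^m\cdot\|_q$, which is an equivalent norm as recalled at the start of Section \ref{sec:preliminaries}. Since $\gamma q<p_m^*<q_m^*$, this gives
\[
\int_{\Om}|u|^{\gamma q}\,dx\le C_1\|\nabla^m u\|_q^{\gamma q}\le C_1\Bigl(\frac{q}{a_0}\Bigr)^{\gamma}\Phi_{\mcH}(\nabla^m u)^\gamma .
\]
Combining the two bounds, the quotient in \eqref{eq:lambdaH} is at least $C_0 a_0^\gamma/(C_1 q^\gamma)>0$ for every $u\neq0$. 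Hence $\lambda_{\mcH,q}>0$.

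The main obstacle is exactly the failure of the pure-norm comparison at large $\|u\|$ described above. Exploiting $a_0>0$ to reach the $q$-homogeneous quantity $\|\nabla^m u\|_q^q$ resolves it. The only remaining point to check is the equivalence of $\|\nabla^m\cdot\|_q$ with the full $W_0^{m,q}$ norm, which is standard (elliptic $L^q$ estimates for iterated Laplacians on Dirichlet data) and already used in the paper's setting.
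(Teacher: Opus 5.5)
Your proof is correct, and it is more careful than the paper's. The paper proves the lemma in one line, citing only \eqref{embedding} and \ref{M3}. That is exactly the norm comparison you examined and rejected. As you point out, combining \ref{M3}, \eqref{eq:min-max-WH} and \eqref{embedding} with $s=\gamma q$ bounds the quotient in \eqref{eq:lambdaH} below by a positive constant only when $\|u\|\le 1$. For $\|u\|>1$ it gives only a lower bound of order $\|u\|^{\gamma(p-q)}$, which tends to $0$. So the paper leaves the large-norm regime unjustified, and your estimate $\Phi_{\mcH}(\nabla^m u)\ge \frac{a_0}{q}\|\nabla^m u\|_q^q$ is precisely the missing ingredient.

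This is not cosmetic. Suppose $a$ vanished on a ball $B\subset\Om$ and $0\ne u\in C_c^\infty(B)$. Then $\Phi_{\mcH}(\nabla^m(tu))=\frac{t^p}{p}\|\nabla^m u\|_p^p$. Since \ref{M1} forces $\mcM(s)\le Cs^\gamma$ for large $s$, the quotient along $tu$ would decay like $t^{\gamma(p-q)}$, giving $\lambda_{\mcH,q}=0$. Thus the assumption $a\ge a_0>0$ in \eqref{eq:standing} is what makes the lemma true. Your argument uses it explicitly; the paper's does not.

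Your passage through $W_0^{m,q}(\Om)$ and elliptic $L^q$-estimates is valid, but the paper's own tools give a shortcut. By Proposition \ref{prop:embeddings}(iii), $L^q(\Om)\hookrightarrow L^{\mcH}(\Om)$, so $\|u\|=\|\nabla^m u\|_{\mcH}\le C\|\nabla^m u\|_q$. Then \eqref{embedding} with $s=\gamma q$ gives $\int_\Om|u|^{\gamma q}\,dx\le C'\|\nabla^m u\|_q^{\gamma q}\le C'(q/a_0)^\gamma\,\Phi_{\mcH}(\nabla^m u)^\gamma$. Together with \ref{M3}, this yields the same uniform lower bound without the norm equivalence on $W_0^{m,q}(\Om)$. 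It also shows the paper's one-line proof is repaired simply by inserting your $a_0$-estimate before applying \eqref{embedding}.
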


\begin{proof}
Assume that M satisfies \textnormal{\ref{M3}}. As $1<\gamma q<p_m^*$, by \eqref{embedding} and  \textnormal{\ref{M3}}, we have \(\lambda_{\mcH,q}>0\).
\end{proof}

\begin{remark}\label{rem:lambdaH}
The attainability of $\lambda_{\mcH,q}$ is not required in the sequel. Indeed, by the definition of the infimum, for every $\varepsilon>0$ there exists $u_\varepsilon\in W_0^{m,\mcH}(\Om)\setminus\{0\}$ such that
\[
\mcM\!\left(\Phi_{\mcH}(\nabla^m u_\varepsilon)\right)
\le
(\lambda_{\mcH,q}+\varepsilon)\int_{\Om}|u_\varepsilon|^{\gamma q}\,dx.
\]
Such an approximate minimizer is sufficient for the variational arguments below.
\end{remark}

To derive a mountain pass solution, we impose the following condition at zero and at infinity:

We replace \textnormal{\ref{h4}} by the following assumption:

\begin{enumerate}[label=\textnormal{(h\arabic*)},leftmargin=2.1cm]
\setcounter{enumi}{4}
\item\label{h5}
\[
\limsup_{t\to 0}\frac{H(x,t)}{|t|^{\gamma q}}
<
\lambda_{\mcH,q}
<
\liminf_{|t|\to\infty}\frac{H(x,t)}{|t|^{\gamma q}}
\quad \text{uniformly for a.e. }x\in\Om.
\]
\end{enumerate}

\begin{theorem}\label{thm:mountain-pass}
Assume \textnormal{\ref{M1}} with $t_0=0$, \textnormal{\ref{M2}}, \textnormal{\ref{M3}}, \textnormal{\ref{h1}}, \textnormal{\ref{h2}}, and \textnormal{\ref{h5}} hold. Then problem \eqref{eq:main-problem-sec4} admits a nontrivial weak solution of mountain pass type.
\end{theorem}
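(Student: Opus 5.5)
We apply the mountain pass theorem to $\mathcal{J}$ on $W_0^{m,\mcH}(\Om)$. The functional satisfies the appropriate compactness condition: (PS) if $p\ge2$, (C) if $1<p<2$. To see this, repeat the proof of Proposition \ref{prop:PS-C} with $K(x)g(u)$ replaced by $h(x,u)$. That proof used only \ref{h1}, \ref{h2} (through the analogue of \eqref{eq:g-estimate}), \ref{M1}, \ref{M2} and Lemma \ref{lem:S+}, and never used oddness or the growth at infinity. Since $\mcH$ is $C^1$ and $h$ has subcritical growth, $\mathcal{J}$ is $C^1$, and $\mathcal{J}(0)=0$. What remains is the geometry. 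The mountain pass theorem in its Cerami version then yields a critical point $u$ with $\mathcal{J}(u)\ge\alpha>0$, so $u\neq0$.

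\emph{Small spheres.} By the left inequality in \ref{h5}, we can fix $\mu<\lambda_{\mcH,q}$ and $\delta>0$ such that $H(x,t)\le\mu|t|^{\gamma q}$ for $|t|\le\delta$. By \ref{h2}, for $|t|\ge\delta$ we have $H(x,t)\le C_\delta|t|^{r}$ for some fixed $r\in(\gamma q,p_m^*]$; here we use $\gamma q<p_m^*$ from \ref{M1}. Hence $H(x,t)\le \mu|t|^{\gamma q}+C|t|^{r}$ for all $t$. The definition of $\lambda_{\mcH,q}$ gives
\[
\mathcal{J}(u)\ge \Bigl(1-\frac{\mu}{\lambda_{\mcH,q}}\Bigr)\mcM(\Phi_{\mcH}(\nabla^m u))-C\mcS_r^r\|u\|^r .
\]
For $\|u\|=\rho<1$, \ref{M3} and \eqref{eq:min-max-WH} give $\mcM(\Phi_{\mcH}(\nabla^m u))\ge C_0 q^{-\gamma}\rho^{\gamma q}$. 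Since $r>\gamma q$, choosing $\rho$ small yields $\mathcal{J}\ge\alpha>0$ on $\|u\|=\rho$.

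\emph{A point below zero.} This is the step where care is needed. By the right inequality in \ref{h5}, pick $\lambda_{\mcH,q}<\nu<\liminf_{|t|\to\infty}H/|t|^{\gamma q}$. Then $H(x,t)\ge \nu|t|^{\gamma q}-C_\nu$ for all $t$; the constant $C_\nu$ is finite because $H$ is bounded on bounded sets by \ref{h2}. Take $\varepsilon$ with $\lambda_{\mcH,q}+\varepsilon<\nu$ and the approximate minimizer $u_\varepsilon$ of Remark \ref{rem:lambdaH}.

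We need to control $\mcM(\Phi_{\mcH}(\nabla^m (tu_\varepsilon)))$ as $t\to\infty$. This is where \ref{M1} with $t_0=0$ enters. It gives $(\log\mcM)'\le\gamma/s$, hence $\mcM(s\tau)\le s^{\gamma}\mcM(\tau)$ for $s\ge1$. Also $\Phi_{\mcH}(\nabla^m(tu))\le t^{q}\Phi_{\mcH}(\nabla^m u)$ for $t\ge1$. Together,
\[
\mcM(\Phi_{\mcH}(\nabla^m(tu_\varepsilon)))\le t^{\gamma q}\mcM(\Phi_{\mcH}(\nabla^m u_\varepsilon)),
\]
where we use that $\mcM$ is nondecreasing. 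Therefore
\[
\mathcal{J}(tu_\varepsilon)\le t^{\gamma q}\Bigl(\lambda_{\mcH,q}+\varepsilon-\nu\Bigr)\int_\Om|u_\varepsilon|^{\gamma q}\,dx+C_\nu|\Om|\to-\infty .
\]
So some $e=tu_\varepsilon$ has $\|e\|>\rho$ and $\mathcal{J}(e)<0$, which completes the geometry. The main point to verify carefully is this scaling estimate for $\mcM$, which is exactly why $t_0=0$ is imposed in \ref{M1}.
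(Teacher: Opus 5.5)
Your proposal is correct and follows the paper's proof essentially step for step. Compactness comes from rerunning Proposition~\ref{prop:PS-C} with $h$ in place of $Kg$; the small-sphere bound comes from the left half of \ref{h5} together with \ref{h2}, \ref{M3} and \eqref{eq:min-max-WH}; and the point below zero comes from scaling the approximate minimizer via $\mcM(\Phi_{\mcH}(\nabla^m(tu)))\le t^{\gamma q}\mcM(\Phi_{\mcH}(\nabla^m u))$ for $t\ge 1$, which follows from \ref{M1} with $t_0=0$.

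Two small points to tighten. First, \ref{h2} only gives $H(x,t)=o(|t|^{p_m^*})$, so your exponent $r$ must be $p_m^*$ itself, as in the paper; a smaller $r$ does not follow. Second, choose $\mu\in[0,\lambda_{\mcH,q})$, because a negative $\mu$ reverses both the combined bound $H\le\mu|t|^{\gamma q}+C|t|^r$ and the step through the definition of $\lambda_{\mcH,q}$.
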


\begin{proof}
We verify the assumptions of the standard mountain pass theorem.

First, by similar approach as in Proposition~\ref{prop:PS-C}, the functional $\mathcal{J}$ satisfies the Palais--Smale condition when $p\ge 2$, and the Cerami condition when $1<p<2$.

Next, by combining \textnormal{\ref{h2}} and \textnormal{\ref{h5}}, we can choose $\varepsilon_0>0$ sufficiently small and $C_0>0$ such that
\begin{equation}\label{eq:G-upper-mp}
H(x,t)\le (\lambda_{\mcH,q}-\varepsilon_0)|t|^{\gamma q}+C_0|t|^{p_m^*}
\qquad \text{for all } (x,t)\in \Omega\times \mathbb{R}.
\end{equation}
Hence, by using \ref{M3} and \eqref{eq:G-upper-mp} for every $u\in W_0^{m,\mcH}(\Omega)$, we have
\begin{align}\label{energy}
\mathcal{J}(u)
&=\mcM\bigl(\Phi_{\mcH}(\nabla^m u)\bigr)-\int_\Omega H(x,u)\,dx \\
&\ge \mcM\bigl(\Phi_{\mcH}(\nabla^m u)\bigr)
-(\lambda_{\mcH,q}-\varepsilon_0)\int_\Omega |u|^{\gamma q}\,dx
-C_0\int_\Omega |u|^{p_m^*}\,dx.
\end{align}
Using \eqref{eq:lambdaH}, we have
\begin{align*}
\mathcal{J}(u)
&\ge
\left(1-\frac{\lambda_{\mcH,q}-\varepsilon_0}{\lambda_{\mcH,q}}\right)
\mcM\bigl(\Phi_{\mcH}(\nabla^m u)\bigr)
-C_0\int_\Omega |u|^{p_m^*}\,dx \\
&=
\frac{\varepsilon_0}{\lambda_{\mcH,q}}
\mcM\bigl(\Phi_{\mcH}(\nabla^m u)\bigr)
-C_0\int_\Omega |u|^{p_m^*}\,dx.
\end{align*}
Now, by \textnormal{\ref{M3}},
\[
\mathcal{J}(u)\ge \frac{C\varepsilon_0}{\lambda_{\mcH,q}}
\bigl(\Phi_{\mcH}(\nabla^m u)\bigr)^\gamma
-C_0\int_\Omega |u|^{p_m^*}\,dx.
\]
If $\|u\|=\rho$ with $0<\rho\le 1$, then, by using the estimate \eqref{eq:min-max-WH} for small norms, we have
\[
\Phi_{\mcH}(\nabla^m u)\ge c\,\|u\|^q=c\,\rho^q
\]
for some $c>0$. Moreover, by the Sobolev embedding
\[
W_0^{m,\mcH}(\Omega)\hookrightarrow L^{p_m^*}(\Omega),
\]
there exists $C_1>0$ such that
\[
\int_\Omega |u|^{p_m^*}\,dx\le C_1\|u\|^{p_m^*}=C_1\rho^{p_m^*}.
\]
Hence
\[
\mathcal{J}(u)\ge \frac{Cc\varepsilon_0}{\lambda_{\mcH,q}}\rho^{\gamma q}-C_2\rho^{p_m^*}
=\rho^{\gamma q}\left(\frac{Cc\varepsilon_0}{\lambda_{\mcH,q}}-C_2\rho^{p_m^*-\gamma q}\right)
\]
for some constant $C_2>0$. Since $\gamma q<p_m^*$, we may choose
\[
\rho=\min\left\{1,\left(\frac{Cc\varepsilon_0}{2C_2\lambda_{\mcH,q}}\right)^{\frac{1}{p_m^*-\gamma q}}\right\}
\]
and
\[
\alpha=\frac{Cc\varepsilon_0}{2\lambda_{\mcH,q}}\rho^{\gamma q}>0,
\]
so that
\[
\mathcal{J}(u)\ge \alpha
\qquad \text{for all } \|u\|=\rho.
\]

On the other hand, by \textnormal{\ref{h5}}, and Remark \ref{rem:lambdaH}, for $\varepsilon_0>0$ sufficiently small, there exists a constant $C_3>0$ and a function
$\varphi_\epsilon\in W_0^{m,\mcH}(\Omega)\setminus\{0\}$ such that
\begin{equation}\label{eq:G-lower-mp}
H(x,t)\ge (\lambda_{\mcH,q}+2\varepsilon_0)|t|^{\gamma q}-C_3
\qquad \text{for all } (x,t)\in \Omega\times \mathbb{R},
\end{equation}
and
\begin{equation}\label{eq:phi-choice-mp}
\lambda_{\mcH,q}\int_\Omega |\varphi_\epsilon|^{\gamma q}\,dx
\le
\mcM\bigl(\Phi_{\mcH}(\nabla^m\varphi_\epsilon)\bigr)
\le
(\lambda_{\mcH,q}+\varepsilon_0)\int_\Omega |\varphi_\epsilon|^{\gamma q}\,dx.
\end{equation}

Moreover, assumption \textnormal{\ref{M1}} with $t_0=0$ implies that, for every $s_1>0$,
\[
\frac{\mcM(s)}{s^\gamma}\le \frac{\mcM(s_1)}{s_1^\gamma}
\qquad \text{for all } s\ge s_1.
\]
Now let $v=t\varphi_\epsilon$ with $t\ge 1$. Since $\nabla^m(t\varphi_\epsilon)=t\nabla^m\varphi_\epsilon,$
we have
\begin{align*}
\Phi_{\mcH}(\nabla^m(t\varphi_\epsilon))
\leq
t^q\Phi_{\mcH}(\nabla^m\varphi_\epsilon).
\end{align*}
Applying the above monotonicity with $s=\Phi_{\mcH}(\nabla^m(t\varphi_\epsilon))$ and
$s_1=\Phi_{\mcH}(\nabla^m\varphi_\epsilon)$, we obtain
\begin{align}\label{mestimate}
\mcM\bigl(\Phi_{\mcH}(\nabla^m(t\varphi_\epsilon))\bigr)
\le
t^{\gamma q}\mcM\bigl(\Phi_{\mcH}(\nabla^m\varphi_\epsilon)\bigr),
\qquad t\ge 1.
\end{align}
Therefore, by \eqref{energy}, \eqref{eq:G-lower-mp}, \eqref{eq:phi-choice-mp}, and \eqref{mestimate}, we have for $t\geq 1$
\begin{align*}
\mathcal{J}(t\varphi_\epsilon)
&=
\mcM\bigl(\Phi_{\mcH}(\nabla^m(t\varphi_\epsilon))\bigr)-\int_\Omega H(x,t\varphi_\epsilon)\,dx \\
&\le
t^{\gamma q}\mcM\bigl(\Phi_{\mcH}(\nabla^m\varphi_\epsilon)\bigr)
-(\lambda_{\mcH,q}+2\varepsilon_0)t^{\gamma q}\int_\Omega |\varphi_\epsilon|^{\gamma q}\,dx
+C_3|\Omega|\\
&\le
\left[(\lambda_{\mcH,q}+\varepsilon_0)-(\lambda_{\mcH,q}+2\varepsilon_0)\right]
t^{\gamma q}\int_\Omega |\varphi_\epsilon|^{\gamma q}\,dx
+C_3|\Omega| \\
&=
-\varepsilon_0 t^{\gamma q}\int_\Omega |\varphi_\epsilon|^{\gamma q}\,dx
+C_3|\Omega|.
\end{align*}
Hence,
\[
\mathcal{J}(t\varphi_\epsilon)\to -\infty
\qquad \text{as } t\to \infty.
\]
Choosing $t$ sufficiently large, we get some $e=t\varphi_\epsilon$ such that
\[
\|e\|>\rho
\qquad \text{and} \qquad
\mathcal{J}(e)<0.
\]

Thus, $\mathcal{J}$ has the mountain pass geometry. This completes the proof.
\end{proof}

\subsection{\texorpdfstring{The $\gamma p$-sublinear case}{The gamma p-sublinear case}}\label{subsec:gamma-p-sublinear}

In this subsection, we study the sublinear case governed by the exponent $\gamma p$. Assume throughout this subsection that
$
1<\gamma p<p_m^*.
$
This exponent $\gamma p$ is natural in the double phase setting near the small-amplitude regime,
since for $0<\tau\le 1$ one has
\[
\Phi_{\mcH}(\nabla^m(\tau u))
=
\int_\Omega
\left(
\frac{\tau^p}{p}|\nabla^m u|^p
+
a(x)\frac{\tau^q}{q}|\nabla^m u|^q
\right)\,dx
\le
\tau^p\Phi_{\mcH}(\nabla^m u).
\]
This is different from the large-amplitude regime, where the exponent $\gamma q$
naturally appears. Therefore, in the $\gamma p$-sublinear case we introduce the
following threshold constant:
\begin{equation}\label{eq:lambdaHp}
\lambda_{\mcH,p}
:=
\inf_{\substack{u\in W_0^{m,\mcH}(\Omega)\\ u\neq 0}}
\frac{\mcM\!\left(\Phi_{\mcH}(\nabla^m u)\right)}
{\displaystyle\int_\Omega |u|^{\gamma p}\,dx}.
\end{equation}

\begin{lemma}\label{lem:lambdaHp-positive}
Assume \textnormal{\ref{M3}} holds. Then $\lambda_{\mcH,p}>0$.
\end{lemma}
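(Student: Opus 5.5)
The plan is to bound the numerator of \eqref{eq:lambdaHp} from below by a quantity that scales exactly like $\int_\Omega|u|^{\gamma p}\,dx$. The naive route through the norm $\|u\|$ and \eqref{eq:min-max-WH} fails. For $\|u\|<1$ it only gives $\Phi_{\mcH}(\nabla^m u)\ge \frac1q\|u\|^q$, while $\int_\Omega|u|^{\gamma p}\,dx\lesssim \|u\|^{\gamma p}$. The resulting quotient is then bounded below only by a multiple of $\|u\|^{\gamma(q-p)}$, which tends to $0$ as $u\to0$. I will therefore discard the $q$-phase and work only with the $p$-phase, which is homogeneous of degree $p$.

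\emph{Step 1.} For $u\in W_0^{m,\mcH}(\Omega)\setminus\{0\}$, use $a\ge0$ to get
\[
\Phi_{\mcH}(\nabla^m u)\ge \frac1p\int_\Omega|\nabla^m u|^p\,dx=\frac1p\|\nabla^m u\|_p^p .
\]
Since $\mcM$ is nondecreasing ($M\ge0$), \ref{M3} then gives
\[
\mcM\bigl(\Phi_{\mcH}(\nabla^m u)\bigr)\ge C_0\,\Phi_{\mcH}(\nabla^m u)^{\gamma}\ge C_0\,p^{-\gamma}\,\|\nabla^m u\|_p^{\gamma p}.
\]

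\emph{Step 2.} By Proposition~\ref{prop:embeddings}(i), $u\in W_0^{m,p}(\Omega)$, where $\|\nabla^m(\cdot)\|_p$ is an equivalent norm. Since $mp<mq<N$, the classical higher-order Sobolev embedding gives $W_0^{m,p}(\Omega)\hookrightarrow L^{p_m^*}(\Omega)$. Because $\Omega$ is bounded and $\gamma p<p_m^*$, this yields a constant $S>0$ with
\[
\|u\|_{\gamma p}\le S\,\|\nabla^m u\|_p \qquad\text{for all } u\in W_0^{m,p}(\Omega).
\]
Raising this to the power $\gamma p$ and combining it with Step 1 gives
\[
\mcM\bigl(\Phi_{\mcH}(\nabla^m u)\bigr)\ge C_0\,p^{-\gamma}S^{-\gamma p}\int_\Omega|u|^{\gamma p}\,dx .
\]
Here $\int_\Omega|u|^{\gamma p}\,dx>0$ because $u\neq0$. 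Dividing and taking the infimum over $u$ yields $\lambda_{\mcH,p}\ge C_0\,p^{-\gamma}S^{-\gamma p}>0$.

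The only point requiring care is the one identified at the start: the lower bound must be exactly $(\gamma p)$-homogeneous, which the Luxemburg-norm estimates cannot provide for small $\|u\|$. Passing to the pure $p$-modular and the $W_0^{m,p}$ Sobolev inequality gives this homogeneity, and everything else is routine.
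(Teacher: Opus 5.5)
Your proof is correct, but it does not follow the paper's route. The paper's proof is a single sentence: it invokes \eqref{embedding}, i.e.\ $\|u\|_{\gamma p}\le \mcS_{\gamma p}\|u\|$ with the Luxemburg norm $\|u\|=\|\nabla^m u\|_{\mcH}$, together with \textnormal{\ref{M3}}. Read literally and combined with \eqref{eq:min-max-WH}, this bounds the quotient in \eqref{eq:lambdaHp} from below only by a multiple of $\min\{1,\|u\|^{\gamma(q-p)}\}$, as you observe. So the regime $\|u\|\to0$ is left unjustified.

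No estimate that passes only through $\|u\|$ can do better. Indeed, $\inf\{\Phi_{\mcH}(\nabla^m u):\|u\|=\rho\}$ is of order $\rho^q$ for small $\rho$: concentrate $\nabla^m u$ so that its $L^p$-part is negligible compared with its $\mcH$-norm. Meanwhile $\sup\{\int_\Om|u|^{\gamma p}\,dx:\|u\|=\rho\}$ is of order $\rho^{\gamma p}$.

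You avoid this in two ways. First, you discard the $q$-phase, using only $a\ge0$. Second, you replace \eqref{embedding} by the stronger inequality $\|u\|_{\gamma p}\le S\|\nabla^m u\|_p$, which follows from Proposition~\ref{prop:embeddings}(i), the classical embedding $W_0^{m,p}(\Om)\hookrightarrow L^{p_m^*}(\Om)$, and H\"older's inequality on the bounded domain. This yields the exactly $\gamma p$-homogeneous bound $\mcM(\Phi_{\mcH}(\nabla^m u))\ge C_0p^{-\gamma}S^{-\gamma p}\int_\Om|u|^{\gamma p}\,dx$ for every $u$, with an explicit constant.

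The paper's version gains only brevity; yours is the argument that actually establishes the lemma. One minor point: the monotonicity of $\mcM$ in your Step 1 is not needed. You apply \textnormal{\ref{M3}} directly at $t=\Phi_{\mcH}(\nabla^m u)$, and only the monotonicity of $t\mapsto t^\gamma$ is used.
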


\begin{proof}
Assume that M satisfies \textnormal{\ref{M3}}. As $1<\gamma p<p_m^*$, by \eqref{embedding} and  \textnormal{\ref{M3}}, we have \(\lambda_{\mcH,p}>0\).
\end{proof}

Throughout this subsection, we assume the following subcritical growth condition:
there exist $r\in(1,p_m^*)$ and $C_r>0$ such that
\begin{equation}\label{eq:g-growth-sec42}
|h(x,t)|
\le
C_r\left(1+|t|^{r-1}\right)
\qquad
\text{for a.e. }x\in\Omega \text{ and all }t\in\mathbb R.
\end{equation}

We impose the following $\gamma p$-sublinear condition at infinity:
\begin{enumerate}[label=\textnormal{(h\arabic*)},leftmargin=2.1cm]
\setcounter{enumi}{5}
\item\label{h6}
\[
\limsup_{|t|\to\infty}
\frac{H(x,t)}{|t|^{\gamma p}}
<
\lambda_{\mcH,p}
\qquad \text{uniformly for a.e. }x\in\Omega.
\]
\end{enumerate}

\begin{proposition}\label{prop:sublinear}
Assume \textnormal{\ref{M3}}, \eqref{eq:g-growth-sec42}, and
\textnormal{\ref{h6}} hold. Then
\begin{enumerate}[label=\textnormal{(\roman*)}]
\item $\mathcal{J}(u)\to\infty$ as $\|u\|\to\infty$;
\item $\mathcal{J}$ satisfies the Palais--Smale condition.
\end{enumerate}
\end{proposition}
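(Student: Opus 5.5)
For (i), I will compare $\int_\Omega H(x,u)\,dx$ with $\lambda_{\mcH,p}\int_\Omega|u|^{\gamma p}\,dx$, using \ref{h6} together with \eqref{eq:g-growth-sec42}. By \ref{h6}, there exist $\varepsilon_0>0$ and $T>0$ such that $H(x,t)\le(\lambda_{\mcH,p}-2\varepsilon_0)|t|^{\gamma p}$ for $|t|\ge T$. For $|t|\le T$, the growth bound gives $|H(x,t)|\le C_r(T+T^r/r)$. Hence
\[
H(x,t)\le(\lambda_{\mcH,p}-2\varepsilon_0)|t|^{\gamma p}+C
\]
for all $t$ and a.e.\ $x$. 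By the definition \eqref{eq:lambdaHp} of $\lambda_{\mcH,p}$,
\[
\mathcal J(u)\ge \mcM(\Phi_{\mcH}(\nabla^m u))-\frac{\lambda_{\mcH,p}-2\varepsilon_0}{\lambda_{\mcH,p}}\mcM(\Phi_{\mcH}(\nabla^m u))-C|\Omega|=\frac{2\varepsilon_0}{\lambda_{\mcH,p}}\mcM(\Phi_{\mcH}(\nabla^m u))-C|\Omega|.
\]
Then \ref{M3} gives $\mcM(\Phi_{\mcH}(\nabla^m u))\ge C_0\Phi_{\mcH}(\nabla^m u)^\gamma$. For $\|u\|>1$, \eqref{eq:min-max-WH} gives $\Phi_{\mcH}(\nabla^m u)\ge\|u\|^p/q$. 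Therefore $\mathcal J(u)\ge c\|u\|^{\gamma p}-C\to\infty$, which proves coercivity. One point to check is whether $\lambda_{\mcH,p}-2\varepsilon_0$ could be negative; in that case the bound is even better, since I can just use $H\le C$ on the positive part. Lemma \ref{lem:lambdaHp-positive} guarantees $\lambda_{\mcH,p}>0$, so the division is legitimate.

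For (ii), let $\{u_n\}$ be a Palais--Smale sequence. Since $\mathcal J(u_n)$ is bounded above, part (i) immediately shows that $\{u_n\}$ is bounded. This is the advantage over Proposition \ref{prop:PS-C}: boundedness needs neither \ref{h1} nor the Cerami refinement. By reflexivity and Proposition \ref{prop:embeddings}(ii), up to a subsequence $u_n\rightharpoonup u$ in $W_0^{m,\mcH}(\Omega)$ and $u_n\to u$ in $L^r(\Omega)$, because $r<p_m^*$. By \eqref{eq:g-growth-sec42} and H\"older's inequality,
\[
\Bigl|\int_\Omega h(x,u_n)(u_n-u)\,dx\Bigr|\le C\bigl(\|u_n-u\|_1+\|u_n\|_r^{r-1}\|u_n-u\|_r\bigr)\to0.
\]
Boundedness of $\{u_n\}$ gives $\langle\mathcal J'(u_n),u_n-u\rangle\to0$. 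Hence $M(\Phi_n)\langle\Psi'(u_n),u_n-u\rangle\to0$, where $\Phi_n:=\Phi_{\mcH}(\nabla^m u_n)$ and $\Psi$ is as in Lemma \ref{lem:S+}.

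The main obstacle is the possible degeneracy of $M$. I will handle it exactly as in Proposition \ref{prop:PS-C}. If $\Phi_n\to0$ along a subsequence, then $u_n\to0$ strongly by \eqref{eq:min-max-WH}. Otherwise $\Phi_n\ge\delta>0$ along a subsequence, and \ref{M2} gives $M(\Phi_n)\ge k_\delta$. In that case $\langle\Psi'(u_n),u_n-u\rangle\to0$, and the $(S_+)$ property from Lemma \ref{lem:S+} yields $u_n\to u$ strongly. Note that \ref{M2} is used here even though the statement lists only \ref{M3}; I take \ref{M1}--\ref{M2} to be standing assumptions. Alternatively, continuity of $M$ together with $M\ge0$ suffices if $\liminf M(\Phi_n)>0$.
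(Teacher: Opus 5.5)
Your proposal is correct and follows essentially the paper's route. The bound $H(x,t)\le(\lambda_{\mcH,p}-\varepsilon)|t|^{\gamma p}+C$, combined with the definition of $\lambda_{\mcH,p}$ and \ref{M3}, gives coercivity; bounded Palais--Smale sequences are then handled by the $(S_+)$ argument of Proposition~\ref{prop:PS-C}, with \ref{M2} as a standing hypothesis, just as the paper implicitly uses it. Your write-up is slightly more careful than the paper's: you choose $\varepsilon_0$ small according to \ref{h6}, whereas the paper takes it arbitrary in $(0,\lambda_{\mcH,p})$, which \ref{h6} does not permit. You also use \eqref{eq:g-growth-sec42} explicitly, both to bound $H$ for $|t|\le T$ and to pass to the limit in the reaction term.
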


\begin{proof}
Fix $\varepsilon_0\in(0,\lambda_{\mcH,p})$. By \textnormal{\ref{h6}}, there exists
$C_0>0$ such that
\begin{equation}\label{eq:G-sublinear-est}
H(x,t)
\le
(\lambda_{\mcH,p}-\varepsilon_0)|t|^{\gamma p}+C_0
\qquad
\text{for a.e. }x\in\Omega \text{ and all }t\in\mathbb R.
\end{equation}
Therefore, for every $u\in W_0^{m,\mcH}(\Omega)$, we have
\begin{align}
\mathcal{J}(u)
&=
\mcM\!\left(\Phi_{\mcH}(\nabla^m u)\right)
-
\int_\Omega H(x,u)\,dx \nonumber\\
&\ge
\mcM\!\left(\Phi_{\mcH}(\nabla^m u)\right)
-
(\lambda_{\mcH,p}-\varepsilon_0)
\int_\Omega |u|^{\gamma p}\,dx
-
C_0|\Omega|.
\label{eq:J-sublinear-lower}
\end{align}
By using \eqref{eq:lambdaHp}, we obtain
\[
\mathcal{J}(u)
\ge
\frac{\varepsilon_0}{\lambda_{\mcH,p}}
\mcM\!\left(\Phi_{\mcH}(\nabla^m u)\right)
-
C_0|\Omega|.
\]
Using \textnormal{\ref{M3}}, we get
\[
\mathcal{J}(u)
\ge
C_1\left(\Phi_{\mcH}(\nabla^m u)\right)^\gamma
-
C_0|\Omega|
\]
for some constant $C_1>0$. Since
\[
\Phi_{\mcH}(\nabla^m u)\to\infty
\qquad
\text{as }\|u\|\to\infty,
\]
we conclude that
\[
\mathcal{J}(u)\to\infty
\qquad
\text{as }\|u\|\to\infty.
\]
This proves \textnormal{(i)}.

We now prove \textnormal{(ii)}. Let $\{u_n\}$ be a Palais--Smale sequence for
$\mathcal{J}$. Since $\mathcal{J}$ is coercive by \textnormal{(i)}, the sequence
$\{u_n\}$ is bounded in $W_0^{m,\mcH}(\Omega)$. Moreover, by the subcritical
growth condition \eqref{eq:g-growth-sec42}, the compactness argument used in the
second step of the proof of Proposition~\ref{prop:PS-C} applies to bounded
Palais--Smale sequences. Hence $\{u_n\}$ admits a strongly convergent subsequence
in $W_0^{m,\mcH}(\Omega)$. Therefore, $\mathcal{J}$ satisfies the Palais--Smale
condition.
\end{proof}

We now impose the following strong lower growth condition near the origin:
\begin{enumerate}[label=\textnormal{(h\arabic*)},leftmargin=2.1cm]
\setcounter{enumi}{6}
\item\label{h7}
\[
\liminf_{|t|\to0}
\frac{H(x,t)}{|t|^{\gamma p}}
=
+\infty
\qquad
\text{uniformly for a.e. }x\in\Omega.
\]
\end{enumerate}

We also assume the following upper estimate on the Kirchhoff potential:
\begin{enumerate}[label=\textnormal{(M\arabic*)},leftmargin=2.1cm]
\setcounter{enumi}{3}
\item\label{M4}
There exists $\beta>0$ such that
\[
\mcM(t)\le \beta t^\gamma
\qquad
\text{for all }t\ge0.
\]
\end{enumerate}

\begin{theorem}\label{thm:4.4}
Assume that \eqref{eq:g-growth-sec42}, \textnormal{\ref{M2}}, \textnormal{\ref{M3}}, \textnormal{\ref{M4}}, \textnormal{\ref{h6}}, and \textnormal{\ref{h7}} hold. 
Suppose moreover that $h(x,\cdot)$ is odd for a.e. $x\in\Omega$. Then the functional $\mathcal{J}$
admits infinitely many distinct pairs of nontrivial critical points.
\end{theorem}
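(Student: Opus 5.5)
We apply the genus theorem (Theorem \ref{genustheorem}) to the even functional $\mathcal{J}$ on $E=W_0^{m,\mcH}(\Om)$. Since $h(x,\cdot)$ is odd, $H(x,\cdot)$ is even and $\mathcal{J}$ is even with $\mathcal{J}(0)=0$. By Proposition \ref{prop:sublinear}, $\mathcal{J}$ is coercive and satisfies $(PS)$. Coercivity, together with the weak lower semicontinuity of $u\mapsto\mcM(\Phi_{\mcH}(\nabla^m u))$ ($\mcM$ nondecreasing, $\Phi_{\mcH}$ convex continuous) and the compactness of $u\mapsto\int_\Om H(x,u)\,dx$ (from \eqref{eq:g-growth-sec42} and Proposition \ref{prop:embeddings}(ii)), shows that $\mathcal{J}$ is bounded below. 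Hence every $c_k=\inf_{A\in\Gamma_k}\sup_A\mathcal{J}$ satisfies $c_k>-\infty$.

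The core step is to show $c_k<0$ for every $k\in\N$. Fix $k$ and a $k$-dimensional subspace $E_k\subset C_0^\infty(\Om)$, which exists since $C_0^\infty(\Om)$ is infinite dimensional. On $E_k$ all norms are equivalent, so there is $c_k'>0$ with $\int_\Om|u|^{\gamma p}\,dx\ge c_k'\|u\|^{\gamma p}$, and also $\|u\|_\infty\le C_k\|u\|$. For $\|u\|=\rho\le1$, \eqref{eq:min-max-WH} gives $\Phi_{\mcH}(\nabla^m u)\le \frac1p\|u\|^p$, so \ref{M4} yields $\mcM(\Phi_{\mcH}(\nabla^m u))\le \beta p^{-\gamma}\rho^{\gamma p}$. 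By \ref{h7}, for any $L>0$ there is $\delta>0$ with $H(x,t)\ge L|t|^{\gamma p}$ for $|t|\le\delta$ and a.e. $x$. Choosing $\rho$ small so that $C_k\rho\le\delta$, every $u\in S_\rho:=\{u\in E_k:\|u\|=\rho\}$ satisfies
\[
\mathcal{J}(u)\le \beta p^{-\gamma}\rho^{\gamma p}-Lc_k'\rho^{\gamma p}<0
\]
once $L>\beta p^{-\gamma}/c_k'$. The sphere $S_\rho$ is closed, symmetric, and homeomorphic to $S^{k-1}$ by an odd map, so $\gamma(S_\rho)=k$. Therefore $S_\rho\in\Gamma_k$ and $c_k\le\sup_{S_\rho}\mathcal{J}<0$.

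By Theorem \ref{genustheorem}(i), each $c_k$ is a critical value with $-\infty<c_1\le c_2\le\cdots<0$. If the $c_k$ are pairwise distinct, we obtain infinitely many critical values, each attained at a pair $\{u,-u\}$ with $u\neq0$, because $\mathcal{J}(0)=0\ne c_k$. If $c_k=\cdots=c_{k+\ell}=c$ for some $\ell\ge1$, then Theorem \ref{genustheorem}(ii) gives $\gamma(K_c)\ge\ell+1\ge2$, so the critical set $K_c$ is infinite, since a finite symmetric set avoiding $0$ has genus $1$. Either way $\mathcal{J}$ has infinitely many distinct pairs of nontrivial critical points.

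The main obstacle is the near-origin estimate in the second paragraph. It needs the $L^\infty$ control on the finite-dimensional space, to enter the regime $|t|\le\delta$ of \ref{h7}, combined with the small-norm bound $\Phi_{\mcH}\lesssim\rho^p$ in \eqref{eq:min-max-WH}; the latter is why the exponent $\gamma p$ rather than $\gamma q$ is the right one here. The boundedness from below also deserves a careful check, but it follows directly from the coercivity already proved.
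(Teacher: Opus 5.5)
Your proposal is correct and follows essentially the same route as the paper. You both use evenness, coercivity and (PS) from Proposition \ref{prop:sublinear}, and a small sphere in a $k$-dimensional subspace of $C_c^\infty(\Om)$, where \ref{M4} with the small-norm bound $\Phi_{\mcH}\le \frac1p\|u\|^p$ and \ref{h7} (via $L^\infty$ control from norm equivalence) force $\mathcal{J}<0$; Theorem \ref{genustheorem} then applies with $-\infty<c_k<0$. The differences are cosmetic: the paper uses an $L^2$-sphere spanned by disjointly supported functions instead of a norm sphere, and your final count (a single coincidence $c_k=c_{k+1}$ already gives genus $\ge 2$, hence an infinite critical set) is slightly cleaner than the paper's.
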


\begin{proof}
We will verify the assumptions of Theorem~\ref{genustheorem}.

Since $h(x,\cdot)$ is odd for a.e. $x\in\Omega$, its primitive $H(x,\cdot)$ is even. Hence
the functional $\mathcal{J}$ is even and clearly $\mathcal{J}(0)=0$. Moreover, by Proposition~\ref{prop:sublinear},
the functional $\mathcal{J}$ satisfies the Palais--Smale condition.

Let $n\in\mathbb{N}^*$. We first show that $\Gamma_n\neq\emptyset$. Choose functions
\[
\phi_1,\phi_2,\dots,\phi_n\in C_c^\infty(\Omega)
\]
such that
\[
\|\phi_i\|_{L^2(\Omega)}=1
\qquad \text{and} \qquad
\operatorname{supp}(\phi_i)\cap \operatorname{supp}(\phi_j)=\emptyset
\quad \text{whenever } i\neq j, ~\text{and}~1\leq i,j\leq n.
\]
Set
\[
E_n:=\operatorname{span}\{\phi_1,\phi_2,\dots,\phi_n\}\subset W_0^{m,\mcH}(\Omega)\cap L^2(\Omega).
\]
For \(0<\sigma<1\), define
\[
S_n^\sigma:=\{u\in E_n:\|u\|_{L^2(\Omega)}=\sigma\}.
\]
Since every \(u\in E_n\) can be uniquely written as
\[
u=\sum_{i=1}^n \lambda_i \phi_i,
\]
the map
\[
\theta:S_n^\sigma\to {S}^{n-1},
\qquad
\theta(u)=\left(\frac{\lambda_1}{\sigma},\frac{\lambda_2}{\sigma},\dots,\frac{\lambda_n}{\sigma}\right),
\]
is an odd homeomorphism, where $S^{n-1}$ is the sphere of dimension ${n-1}$. Therefore, \(\gamma(S_n^\sigma)=n\); see \cite{Rabinowitz-1986}. Consequently,
\(S_n^\sigma\in\Gamma_n\). In particular, \(\Gamma_n\neq\emptyset\). We now prove that \(c_n<0\), where
\[
c_n:=\inf_{A\in\Gamma_n}\sup_{u\in A}\mathcal{J}(u).
\]
Since Proposition~\ref{prop:sublinear} yields that \(\mathcal{J}\) is coercive, the functional \(\mathcal{J}\) is bounded
from below, and hence
$
-\infty<c_n.
$

By \textnormal{\ref{h7}}, for every \(A>0\), there exists \(t_A>0\) such that
\begin{equation}
H(x,t)\ge A|t|^{\gamma p},
\qquad
\text{for a.e. }x\in\Omega,\ |t|\le t_A .
\label{eq:G-small}
\end{equation}
Set
\[
M_n:=\max\left\{\|\phi_i\|_{L^\infty(\Omega)}:1\le i\le n\right\}.
\]
Then, for
\[
\sigma:=\min\left\{\frac12,\frac{t_A}{2nM_n}\right\},
\]
we have
\begin{equation}
\|u\|_{L^\infty(\Omega)}\le \frac{t_A}{2}
\qquad
\text{for all }u\in S_n^\sigma.
\end{equation}
Therefore, by \eqref{eq:G-small},
\begin{equation}\label{eq:G-est-44}
    \int_\Omega H(x,u)\,dx
\ge
A\int_\Omega |u|^{\gamma p}\,dx,
\qquad
u\in S_n^\sigma .
\end{equation}

Fix \(n\in\mathbb{N}^*\). Since all norms are equivalent on the finite-dimensional space \(E_n\), there exists
\(C_n>0\) such that
\begin{equation}
\frac{1}{C_n}\|u\|
\le
\|u\|_{L^2(\Omega)}
\le
C_n\|u\|_{L^{\gamma p}(\Omega)}
\qquad
\text{for all }u\in E_n.
\label{eq:En-equivalence}
\end{equation}
By \textnormal{\ref{M4}} and \eqref{eq:En-equivalence}, we get
\begin{equation}
\mcM\bigl(\Phi_{\mcH}(\nabla^m u)\bigr)
\le
\widetilde C_n\|u\|_{L^2(\Omega)}^{\gamma p}
\qquad
\text{for all }u\in S_n^\sigma
\label{eq:M-upper-44}
\end{equation}
for some \(\widetilde C_n>0\). Moreover, from \eqref{eq:En-equivalence},
\begin{equation}
\int_\Omega |u|^{\gamma p}\,dx
=
\|u\|_{L^{\gamma p}(\Omega)}^{\gamma p}
\ge
\frac{1}{C_n^{\gamma p}}\|u\|_{L^2(\Omega)}^{\gamma p}.
\label{eq:Lgp-lower-44}
\end{equation}
Combining \eqref{eq:G-est-44}--\eqref{eq:Lgp-lower-44}, we obtain
\[
\mathcal{J}(u)
\le
\left(
\widetilde C_n-\frac{A}{C_n^{\gamma p}}
\right)
\|u\|_{L^2(\Omega)}^{\gamma p}
\qquad
\text{for all }u\in S_n^\sigma.
\]
Choosing \(A>\widetilde C_n C_n^{\gamma p}\), we deduce that
\[
\sup_{u\in S_n^\sigma}\mathcal{J}(u)<0.
\]
Since \(S_n^\sigma\in\Gamma_n\), it follows that
\[
c_n\le \sup_{u\in S_n^\sigma}\mathcal{J}(u)<0.
\]
Thus
\[
-\infty<c_n<0.
\]
By Theorem~\ref{genustheorem}, \(c_n\) is a critical value of \(\mathcal{J}\).
If the sequence $\{c_n\}$ contains infinitely many distinct values, then
$\mathcal{J}$ admits infinitely many nontrivial critical points. Otherwise,
some critical value is repeated infinitely often, and Theorem~\ref{genustheorem}
\textnormal{(ii)} implies that the corresponding critical set has arbitrarily
large genus. Hence it contains infinitely many distinct critical points.
Therefore, $\mathcal{J}$ admits infinitely many nontrivial critical points.

\end{proof}

We finally consider a weaker lower condition near the origin. For this purpose, we assume
the following scaling condition on the Kirchhoff potential:
\begin{enumerate}[label=\textnormal{(M\arabic*)},leftmargin=2.1cm]
\setcounter{enumi}{4}
\item\label{M5}
\[
\mcM(\theta s)\le \theta^\gamma \mcM(s)
\qquad
\text{for all }0<\theta\le1
\text{ and all }s\ge0.
\]
\end{enumerate}

We replace \textnormal{\ref{h7}} by the following weaker condition:
\begin{enumerate}[label=\textnormal{(h\arabic*)},leftmargin=2.1cm]
\setcounter{enumi}{7}
\item\label{h8}
\[
\lambda_{\mcH,p}
<
\liminf_{|t|\to0}
\frac{H(x,t)}{|t|^{\gamma p}}
\qquad
\text{uniformly for a.e. }x\in\Omega.
\]
\end{enumerate}

Under this weaker assumption, one cannot in general apply the Krasnosel'skii genus
argument used in Theorem~\ref{thm:4.4}. Nevertheless, combining the coercivity and
compactness established in Proposition~\ref{prop:sublinear} with a suitable
test-function argument near the origin, one still obtains a nontrivial critical
point of \(\mathcal{J}\) at a negative energy level. This gives the following existence result.

\begin{theorem}\label{thm:4.5}
Assume that \textnormal{\ref{M2}}, \textnormal{\ref{M3}}, \textnormal{\ref{M5}},
\eqref{eq:g-growth-sec42}, \textnormal{\ref{h6}}, and \textnormal{\ref{h8}} hold.
Then the functional $\mathcal{J}$ is bounded from below and
\[
c:=\inf\{\mathcal{J}(u):u\in W_0^{m,\mcH}(\Omega)\}<0
\]
is a critical value of $\mathcal{J}$. Consequently, problem \eqref{eq:main-problem-sec4}
admits a nontrivial weak solution.
\end{theorem}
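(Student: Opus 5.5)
The plan is to apply the direct method, using Proposition~\ref{prop:sublinear} for coercivity and compactness and a scaling test function near the origin to get negative energy. Since Proposition~\ref{prop:sublinear} only uses \ref{M3}, \eqref{eq:g-growth-sec42} and \ref{h6}, it applies verbatim. Part (i) gives $\mathcal{J}(u)\to\infty$ as $\|u\|\to\infty$. Moreover $\mathcal{J}$ is bounded on bounded sets, because $\mcM$ is continuous and $H$ has subcritical growth. Hence $c=\inf\mathcal{J}>-\infty$, which is also visible from the lower bound $\mathcal{J}(u)\ge C_1\Phi_{\mcH}(\nabla^m u)^\gamma-C_0|\Omega|$ obtained in that proof.

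To show $c<0$, I use \ref{h8} to pick $\varepsilon_0>0$ and $t_0>0$ such that
\[
H(x,t)\ge(\lambda_{\mcH,p}+2\varepsilon_0)|t|^{\gamma p}\quad\text{for a.e. }x\in\Omega,\ |t|\le t_0 .
\]
By the definition \eqref{eq:lambdaHp} of the infimum, there is $\varphi\ne0$ with $\mcM(\Phi_{\mcH}(\nabla^m\varphi))\le(\lambda_{\mcH,p}+\varepsilon_0)\int_\Omega|\varphi|^{\gamma p}\,dx$. I take $\varphi$ in $C_c^\infty(\Omega)$, hence bounded; this is justified by density of $C_c^\infty$ in $W_0^{m,\mcH}(\Omega)$ together with continuity of both numerator and denominator. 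For $0<\tau\le1$, the scaling estimate $\Phi_{\mcH}(\nabla^m(\tau\varphi))\le\tau^p\Phi_{\mcH}(\nabla^m\varphi)$ holds. Since $\mcM$ is nondecreasing ($M\ge0$), \ref{M5} with $\theta=\tau^p$ gives
\[
\mcM(\Phi_{\mcH}(\nabla^m(\tau\varphi)))\le\tau^{\gamma p}\mcM(\Phi_{\mcH}(\nabla^m\varphi)).
\]
Choosing $\tau\le t_0/\|\varphi\|_\infty$, so that $|\tau\varphi|\le t_0$, yields
\[
\mathcal{J}(\tau\varphi)\le\tau^{\gamma p}\bigl[(\lambda_{\mcH,p}+\varepsilon_0)-(\lambda_{\mcH,p}+2\varepsilon_0)\bigr]\int_\Omega|\varphi|^{\gamma p}\,dx<0 .
\]
Hence $c<0$.

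Attainment comes next. I take a minimizing sequence, and by Ekeland's variational principle I may assume it is a Palais--Smale sequence at level $c$. Proposition~\ref{prop:sublinear}(ii) then gives a strongly convergent subsequence $u_n\to u$. Continuity of $\mathcal{J}$ and $\mathcal{J}'$ gives $\mathcal{J}(u)=c$ and $\mathcal{J}'(u)=0$. Since $c<0=\mathcal{J}(0)$, we get $u\neq0$, and $u$ is a nontrivial weak solution.

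An alternative to Ekeland is weak lower semicontinuity: $u\mapsto\mcM(\Phi_{\mcH}(\nabla^m u))$ is convex-composed-with-nondecreasing, hence weakly l.s.c., and the $H$-term is weakly continuous by the compact embedding. A global minimizer is then a critical point.

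The main obstacle is the density step, i.e.\ ensuring that the near-infimum function can be taken bounded so that $|\tau\varphi|\le t_0$ pointwise. If that fails, I would instead split $\Omega$ into $\{|\tau\varphi|\le t_0\}$ and its complement. On the complement I would use the bound $H\ge-C|t|^{r}$ for $|t|\ge t_0$, which follows from \eqref{eq:g-growth-sec42}. The measure of that set tends to $0$ as $\tau\to0$, and its contribution is $o(\tau^{\gamma p})$ by dominated convergence.
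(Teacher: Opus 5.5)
Your proposal is correct and follows essentially the same route as the paper: coercivity and the Palais--Smale condition from Proposition~\ref{prop:sublinear}, a near-minimizer $\phi\in C_c^\infty(\Omega)$ for $\lambda_{\mcH,p}$ obtained by density, scaled by $\tau\le\min\{t_0/\|\phi\|_{L^\infty},1\}$ and combined with the monotonicity of $\mcM$ and \ref{M5} to get $\mathcal{J}(\tau\phi)<0$, and then Ekeland's principle plus (PS) to obtain a nontrivial critical point at level $c<0$. Your weak lower semicontinuity alternative and your fallback for unbounded near-minimizers are both valid but not needed, since density of $C_c^\infty(\Omega)$ in $W_0^{m,\mcH}(\Omega)$ holds by the definition of the space.
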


\begin{proof}
By Proposition \ref{prop:sublinear}, the functional $\mathcal{J}$ is coercive, bounded from below,
and satisfies the Palais--Smale condition. Hence
\[
c:=\inf\{\mathcal{J}(u):u\in W_0^{m,\mcH}(\Omega)\}
\]
is well defined. It remains to prove that $c<0$.
By \textnormal{\ref{h8}}, we may choose $\varepsilon_0>0$ and $t_0>0$ such that
\begin{equation}\label{eq:G-small-thm45}
H(x,t)
\ge
(\lambda_{\mcH,p}+2\varepsilon_0)|t|^{\gamma p}
\qquad
\text{for a.e. }x\in\Omega
\text{ and all }|t|\le t_0.
\end{equation}
By the definition of $\lambda_{\mcH,p}$ and the density of $C_c^\infty(\Omega)$ in
$W_0^{m,\mcH}(\Omega)$, there exists
$\phi\in C_c^\infty(\Omega)\setminus\{0\}$ such that
\begin{equation}\label{eq:phi-thm45}
\mcM\!\left(\Phi_{\mcH}(\nabla^m\phi)\right)
\le
(\lambda_{\mcH,p}+\varepsilon_0)
\int_\Omega |\phi|^{\gamma p}\,dx.
\end{equation}
Set
\[
\tau
:=
\min\left\{
\frac{t_0}{\|\phi\|_{L^\infty}},
\frac12
\right\}.
\]
Then $0<\tau\le1$ and $|\tau\phi(x)|\le t_0$ for a.e. $x\in\Omega$. Therefore, by
\eqref{eq:G-small-thm45},
\begin{equation}\label{eq:G-small-phi-thm45}
\int_\Omega H(x,\tau\phi)\,dx
\ge
(\lambda_{\mcH,p}+2\varepsilon_0)
\tau^{\gamma p}
\int_\Omega |\phi|^{\gamma p}\,dx.
\end{equation}
On the other hand, since $0<\tau\le1$ and $p<q$, we have
\begin{align}
\Phi_{\mcH}(\nabla^m(\tau\phi))
&=
\int_\Omega
\left(
\frac{\tau^p}{p}|\nabla^m\phi|^p
+
a(x)\frac{\tau^q}{q}|\nabla^m\phi|^q
\right)\,dx \nonumber\\
&\le
\tau^p
\int_\Omega
\left(
\frac{1}{p}|\nabla^m\phi|^p
+
a(x)\frac{1}{q}|\nabla^m\phi|^q
\right)\,dx \nonumber\\
&=
\tau^p\Phi_{\mcH}(\nabla^m\phi).
\label{eq:Phi-small-thm45}
\end{align}
Using the monotonicity of $\mcM$, \eqref{eq:Phi-small-thm45}, and
\textnormal{\ref{M5}}, we get
\begin{equation}\label{eq:M-small-thm45}
\mcM\!\left(\Phi_{\mcH}(\nabla^m(\tau\phi))\right)
\le
\mcM\!\left(\tau^p\Phi_{\mcH}(\nabla^m\phi)\right)
\le
\tau^{\gamma p}
\mcM\!\left(\Phi_{\mcH}(\nabla^m\phi)\right).
\end{equation}
Combining \eqref{eq:G-small-phi-thm45}, \eqref{eq:M-small-thm45}, and
\eqref{eq:phi-thm45}, we obtain
\begin{align*}
\mathcal{J}(\tau\phi)
&=
\mcM\!\left(\Phi_{\mcH}(\nabla^m(\tau\phi))\right)
-
\int_\Omega H(x,\tau\phi)\,dx \\
&\le
\tau^{\gamma p}
\mcM\!\left(\Phi_{\mcH}(\nabla^m\phi)\right)
-
(\lambda_{\mcH,p}+2\varepsilon_0)
\tau^{\gamma p}
\int_\Omega |\phi|^{\gamma p}\,dx \\
&\le
(\lambda_{\mcH,p}+\varepsilon_0)
\tau^{\gamma p}
\int_\Omega |\phi|^{\gamma p}\,dx
-
(\lambda_{\mcH,p}+2\varepsilon_0)
\tau^{\gamma p}
\int_\Omega |\phi|^{\gamma p}\,dx \\
&=
-\varepsilon_0
\tau^{\gamma p}
\int_\Omega |\phi|^{\gamma p}\,dx
<0.
\end{align*}
Therefore,
\[
c\le \mathcal{J}(\tau\phi)<0.
\]
Hence $c<0$. Since $\mathcal{J}$ is bounded from below, Ekeland's variational principle gives a minimizing
sequence $\{u_n\}\subset W_0^{m,\mcH}(\Omega)$ such that
\[
\mathcal{J}(u_n)\to c
\qquad
\text{and}
\qquad
\mathcal{J}'(u_n)\to0
\quad
\text{in }\bigl(W_0^{m,\mcH}(\Omega)\bigr)^*.
\]
By the Palais-Smale condition, up to a subsequence,
\[
u_n\to u
\quad
\text{strongly in }W_0^{m,\mcH}(\Omega)
\]
for some $u\in W_0^{m,\mcH}(\Omega)$. Passing to the limit, we obtain
\[
\mathcal{J}(u)=c
\qquad
\text{and}
\qquad
\mathcal{J}'(u)=0.
\]
Thus $c$ is a critical value of $\mathcal{J}$. Since $c<0=\mathcal{J}(0)$, the corresponding critical point
is nontrivial. Therefore, problem \eqref{eq:main-problem-sec4} admits a nontrivial weak
solution.
\end{proof}

\section*{Funding Declaration}
The authors received no specific funding for this work

\end{document}